\theoremstyle{plain}
\newtheorem{theorem}{Theorem}[section]
\newtheorem{lemma}[theorem]{Lemma}
\newtheorem{prop}[theorem]{Proposition}
\theoremstyle{definition}
\theoremstyle{remark}
\newcommand{\nri}{n\rightarrow\infty}
\newcommand{\bbZ}{\mathbb{Z}}
\newcommand{\bbR}{\mathbb{R}}
\newcommand{\bbC}{\mathbb{C}}
\newcommand{\bbN}{\mathbb{N}}
\DeclareMathOperator*{\Real}{Re}
\DeclareMathOperator*{\Imag}{Im}
\title[]{Convergence Rates of Exceptional Zeros of Exceptional Orthogonal Polynomials}
\author[]{Brian Simanek}
\date{}
\begin{document}
\maketitle

\begin{abstract}
We consider the zeros of exceptional orthogonal polynomials (XOP).  Exceptional orthogonal polynomials were originally discovered as eigenfunctions of second order differential operators that exist outside the classical Bochner-Brenke classification due to the fact that XOP sequences omit polynomials of certain degrees.  This omission causes several properties of the classical orthogonal polynomial sequences to not extend to the XOP sequences.  One such property is the restriction of the zeros to the convex hull of the support of the measure of orthogonality.  In the XOP case, the zeros that exist outside the classical intervals are called exceptional zeros and they often converge to easily identifiable limit points as the degree becomes large.  We deduce the exact rate of convergence and verify that certain estimates that previously appeared in the literature are sharp.
\end{abstract}

\vspace{4mm}

\footnotesize\noindent\textbf{Keywords:} Exceptional Orthogonal Polynomials, Jacobi Polynomials, Laguerre Polynomials, Hermite Polynomials

\vspace{2mm}

\noindent\textbf{2020 Mathematics Subject Classification:} Primary 42C05; Secondary 26C10

\vspace{2mm}

\normalsize

\section{Introduction}\label{intro}

The classical Bochner-Brenke classification states that (up to a linear change of variables) the only sequences of orthogonal polynomials $\{p_n\}_{n=0}^{\infty}$ with $\deg(p_n)=n$ that are a complete set of eigenfunctions for a second order linear ODE are the Jacobi polynomials, the Laguerre polynomials, and the Hermite polynomials \cite[Theorem 4.2.2]{SimonIV}.  In \cite{GKM09}, Gomez-Ullate, Kamran, and Milson initiated the study of exceptional orthogonal polynomial sequences by studying sequences of orthogonal polynomials that are a complete set of eigenfunctions of a second order linear differential operator, but for which the sequence omits polynomials of finitely many degrees.  The term exceptional orthogonal polynomials has since taken on a more general meaning (see \cite{Bonn,BK18}).   It was recently proven in \cite{GGM} that every exceptional orthogonal polynomial sequence is related to a classical orthogonal polynomial sequence by finitely many Darboux transformations.

The absence of polynomials of certain degrees from an exceptional orthogonal polynomial sequence forces these polynomials to differ from their classical counterparts in several important ways.  The particular property we will focus on is the location of the zeros.  The zeros of exceptional orthogonal polynomials fit into two categories: regular and exceptional.  The regular zeros are the ones that exist in the support of the classical measure of orthogonality and the exceptional zeros are the ones that exist outside this interval.  For example, the regular zeros of an exceptional Hermite polynomial are the real zeros and the the exceptional zeros of an exceptional Hermite polynomial are the non-real zeros.  Under very mild hypotheses, one can understand the behavior of the exceptional zeros and show that the number of exceptional zeros is bounded as the degree of the polynomial tends to infinity and the exceptional zeros tend to easily identifiable limit points as the degree tends to infinity.  Our main results are estimates on the rate of convergence of these exceptional zeros to their limit points.

Such rates of convergence have been studied before.  In \cite{Bonn} Bonneux showed that in the case of exceptional Jacobi polynomials, exceptional zeros approach their limit points at a rate of $O(n^{-1})$ as $\nri$, where $n$ is the degree of the polynomial (see also \cite{DL,Lun}).  In \cite{BK18} Bonneux and Kuijlaars showed that in the case of exceptional Laguerre polynomials, exceptional zeros approach their limit points at a rate of $O(n^{-1/2})$ as $\nri$.  In \cite{KM} Kuijlaars and Milson showed that in the case of exceptional Hermite polynomials, exceptional zeros approach their limit points at a rate of $O(n^{-1/2})$ as $\nri$.  We will show that all three estimates are - in general - sharp and provide an exact formula for the leading order rate of convergence.

In Section \ref{back} we will state our new results and review some of the necessary background on the subject of exceptional orthogonal polynomial sequences and other results from the literature that will be necessary to prove our main results.  In Section \ref{jac} we will prove our results for exceptional Jacobi polynomials.  In Section \ref{lag} we will prove our results for exceptional Laguerre polynomials.  In Section \ref{herm} we will prove our results for exceptional Hermite polynomials.  In Section \ref{examples} we will present some examples that illustrate the estimates from our theorems.

\section{Background $\&$ Results}\label{back}

In this section, we will review some of the notation, terminology, and results from the literature that will be needed in our later analysis.

\subsection{Partitions}\label{part}

If $r\in\bbN\cup\{0\}$, then a partition $\lambda$ will be an $r$-tuple of non-increasing natural numbers:
\begin{align*}
\lambda&=(\lambda_1,\ldots,\lambda_{r}),\qquad\qquad\lambda_1\geq\lambda_2\cdots\geq\lambda_{r}\geq1.
\end{align*}
We also define $|\lambda|$ by the formula
\[
|\lambda|=\lambda_1+\cdots+\lambda_{r}.
\]
We call the partition $\lambda$ an \textit{even} partition if $r$ is even and $\lambda_{2j}=\lambda_{2j-1}$ for $j=1,\ldots,r/2$.  For completeness, we note that the empty partition (for which $r=0$) is an even partition by convention.

\subsection{Exceptional Jacobi Polynomials}\label{xjac}

The exceptional Jacobi polynomials are generalizations of the classical Jacobi polynomials that are orthogonal on $[-1,1]$ with an appropriate weight.  We will be interested in studying properties of their zeros.  In \cite[Theorem 6.6]{Bonn}, it was shown that the exceptional zeros of certain exceptional Jacobi polynomials tend to their limit points at a rate of $O(n^{-1})$ as $\nri$, where $n$ is the degree of the polynomial.  We will show that this result is sharp and provide the leading order of the asymptotics in many cases.

We begin our discussion with a review of some important properties of classical and generalized Jacobi polynomials and then discuss the exceptional Jacobi polynomials.

\subsubsection{Classical Jacobi Polynomials}\label{classjac}

Recall that for any $\alpha,\beta>-1$, the classical Jacobi polynomials $\{P_n^{(\alpha,\beta)}\}_{n=0}^{\infty}$ are orthogonal on the interval $[-1,1]$ with respect to the weight function $(1-x)^{\alpha}(1+x)^{\beta}$.  The polynomial $P_n^{(\alpha,\beta)}$ has degree $n$ and satisfies the second order linear ODE
\[
(1-x^2)y''+(\beta-\alpha-(\alpha+\beta+2)x)y'+n(n+\alpha+\beta+1)y=0.
\] 
The ODE only defines the polynomial $P_n^{(\alpha,\beta)}$ up to a multiplicative constant, so we will write explicitly
\[
P_n^{(\alpha,\beta)}(z)=\frac{\Gamma(\alpha+n+1)}{n!\Gamma(\alpha+\beta+n+1)}\sum_{m=0}^n\binom{n}{m}\frac{\Gamma(\alpha+\beta+n+m+1)}{\Gamma(\alpha+m+1)}\left(\frac{z-1}{2}\right)^m
\]
Notice that one can make sense of this expression for any complex numbers $\alpha$ and $\beta$ so we will take this as our definition of $P_n^{(\alpha,\beta)}(z)$ for all $\alpha,\beta\in\bbC$.  Only the case $\alpha,\beta>-1$ corresponds to orthogonal polynomials (but see \cite{KMO}).  %However, without additional restrictions on $\alpha$ and $\beta$ one can say very little about the zeros of $P_n^{(\alpha,\beta)}$ and in fact this polynomial could have degree smaller than $n$.
From this formula for $P_n^{(\alpha,\beta)}(z)$, one can check that
\begin{equation}\label{jacdiff}
\frac{d}{dz}P_n^{(\alpha,\beta)}(z)=\frac{\alpha+\beta+n+1}{2}P_{n-1}^{(\alpha,+1\beta+1)}(z)
\end{equation}

Our analysis will also require some detailed knowledge of generalized Jacobi polynomials, which we now define following the notation and formulas from \cite[Section 2]{Bonn}.  To do so, we must first fix two partitions $\lambda$ and $\mu$:
\begin{align*}
\lambda&=(\lambda_1,\ldots,\lambda_{r_1}),\qquad\lambda_1\geq\lambda_2\cdots\geq\lambda_{r_1}\geq1\\
\mu&=(\mu_1,\ldots,\mu_{r_2}),\qquad\mu_1\geq\mu_2\cdots\geq\mu_{r_2}\geq1
\end{align*}
If we set $r=r_1+r_2$, then we can define the generalized Jacobi polynomial $\Omega_{\lambda,\mu}^{(\alpha,\beta)}$ by the Wronskian determinant
\[
\Omega_{\lambda,\mu}^{(\alpha,\beta)}(x)=(1+x)^{(\beta+r_1)r_2}\cdot\mbox{Wr}[f_1,\ldots,f_r]
\]
where
\begin{align*}
f_j(x)&=P_{\lambda_j+r_1-j}^{(\alpha,\beta)}(x),\qquad\qquad\qquad\qquad j=1,\ldots,r_1\\
f_{r_1+j}(x)&=(1+x)^{-\beta}P_{\mu_j+r_2-j}^{(\alpha,-\beta)}(x),\qquad\qquad j=1,\ldots,r_2
\end{align*}
According to \cite[Lemma 2.3]{Bonn} and \cite{D17}, $\Omega_{\lambda,\mu}^{(\alpha,\beta)}(x)$ is a polynomial of degree exactly $|\lambda|+|\mu|$ as long as all of the following conditions are satisfied:
\begin{itemize}
\item[(C1)] $\alpha+\beta+\lambda_j+r_1-j\not\in\{-1,-2,\ldots,-(\lambda_j+r_1-j)\}$ for all $j=1,\ldots,r_1$
\item[(C2)] $\alpha-\beta+\mu_j+r_2-j\not\in\{-1,-2,\ldots,-(\mu_j+r_2-j)\}$ for all $j=1,\ldots,r_2$
\item[(C3)] $\beta\neq(\mu_j+r_2-j)-(\lambda_i+r_1-i)$ for all $i=1,\ldots,r_1$ and $j=1,\ldots,r_2$
\end{itemize}
(see also \cite{D15}).  According to \cite{D17} and \cite[Lemma 2.8]{Bonn}, if $\alpha>-1$, $\beta>\mu_1+r_2-1$, and (C1-C3) are satisfied, then the polynomial $\Omega_{\lambda,\mu}^{(\alpha,\beta)}(x)$ has no zeros in $[-1,1]$ if $\lambda$ is an even partition.

\subsubsection{Exceptional Jacobi Polynomials}\label{xmjac}

Now we will show how to construct the exceptional Jacobi polynomials by following the construction in \cite{Bonn} (the special case of Legendre polynomials is discussed from a different point of view in \cite{GGM2}).  Let us retain the notation from the previous subsection.  Let us also define
\[
\bbN_{\lambda,\mu}=\{n\in\bbN\cup\{0\}:n\geq|\lambda|+|\mu|-r_1,\, n-|\lambda|-|\mu|\neq\lambda_j-j\,\,\mbox{for}\,\, j=1,2,\ldots,r_1\}.
\]
It is easy to see that $|\bbN_0\setminus\bbN_{\lambda,\mu}|=|\lambda|+|\mu|$, where $\bbN_0=\bbN\cup\{0\}$.  If $\alpha,\beta\in\bbR$ satisfy conditions (C1-C3) from the previous subsection and $n\in\bbN_{\lambda,\mu}$, one can define the exceptional Jacobi polynomial $P_{\lambda,\mu,n}^{(\alpha,\beta)}$ by
\[
P_{\lambda,\mu,n}^{(\alpha,\beta)}(x)=(1+x)^{(\beta+r_1+1)r_2}\cdot\mbox{Wr}[f_1,\ldots,f_r,P_s^{(\alpha,\beta)}(x)],
\]
where $s=n-|\lambda|-|\mu|+r_1$ (see \cite[Definition 2.11]{Bonn}).  We now recall the following result, which comes from \cite[Lemma 2.13]{Bonn}.

\begin{lemma}[\cite{Bonn}]\label{new213}
Assume $\lambda$ and $\mu$ are partitions and that $\alpha,\beta\in\bbR$ satisfy (C1-C3) above.  Assume further that
\begin{align*}
&\alpha+\beta+n-|\lambda|-|\mu|+r_1\not\in\{-1,-2,\ldots,-(n-|\lambda|-|\mu|+r_1)\}\\
&\beta\neq(\mu_j+r_2-j)-(n-|\lambda|-|\mu|+r_1)\qquad\qquad\qquad\qquad\qquad\qquad\qquad j=1,\ldots,r_2.
\end{align*}
Then the degree of $P_{\lambda,\mu,n}^{(\alpha,\beta)}$ is exactly $n$ when $n\in\bbN_{\lambda,\mu}$.
\end{lemma}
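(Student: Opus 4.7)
\medskip
\noindent\textbf{Proof plan.}
The natural viewpoint is that $P_{\lambda,\mu,n}^{(\alpha,\beta)}$ is the Wronskian defining $\Omega_{\lambda,\mu}^{(\alpha,\beta)}$ augmented by one extra column $P_s^{(\alpha,\beta)}(x)$ and by one extra power of $(1+x)$ in the prefactor. In this picture, the two additional hypotheses in the lemma are precisely the instances of conditions (C1) and (C3) that arise when the extra column $P_s^{(\alpha,\beta)}$ is treated as an additional member of the ``$\lambda$-family.'' Under this interpretation the strategy is to redo the degree count of \cite[Lemma 2.3]{Bonn} and \cite{D17} with one extra entry in the Wronskian.

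First I would check that $P_{\lambda,\mu,n}^{(\alpha,\beta)}$ is actually a polynomial, not just a rational expression, so that its degree is a well-defined integer. The $r_2$ columns coming from $\mu$ carry a common factor $(1+x)^{-\beta}$; by the identity $\mbox{Wr}[h g_1,\ldots,h g_m]=h^m\mbox{Wr}[g_1,\ldots,g_m]$ applied after column reductions (replacing derivatives of $(1+x)^{-\beta}P^{(\alpha,-\beta)}$ by $(1+x)^{-\beta-k}$ times polynomials), the Wronskian acquires a common factor of $(1+x)^{-\beta r_2 -(r_1+1)r_2}$, and this is precisely what the prefactor $(1+x)^{(\beta+r_1+1)r_2}$ is designed to cancel. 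This also establishes that the result has the correct analytic structure at $x=-1$.

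Next I would count degrees. Adding $s=n-|\lambda|-|\mu|+r_1$ to the sum of the effective polynomial degrees of the $r$ entries of $\Omega_{\lambda,\mu}^{(\alpha,\beta)}$, subtracting the additional $r$ that arises when the Wronskian grows from $r\times r$ to $(r{+}1)\times(r{+}1)$ (namely $\binom{r+1}{2}-\binom{r}{2}=r$), and incorporating the extra unit in the prefactor exponent, a short arithmetic check using $\deg\Omega_{\lambda,\mu}^{(\alpha,\beta)}=|\lambda|+|\mu|$ gives exactly $n$. To see that this degree is attained, I would pass to monic normalizations of every Jacobi polynomial appearing in the Wronskian. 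The first new hypothesis ensures that $P_s^{(\alpha,\beta)}$ has its full degree $s$, while (C1)--(C2) do the same for the other Jacobi entries, so these normalizations are legitimate. The top-order term of the Wronskian is then the Wronskian of pure monomials $x^{d_1},\ldots,x^{d_{r+1}}$, a Vandermonde-type quantity that is nonzero provided the effective degrees are pairwise distinct. Distinctness within the $\lambda$-family and within the $\mu$-family is automatic, and the condition $n\in\bbN_{\lambda,\mu}$ is exactly what guarantees $s=n-|\lambda|-|\mu|+r_1\neq\lambda_j+r_1-j$ for every $j$, ruling out coincidence between $s$ and the other $\lambda$-effective degrees.

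The main obstacle is then bookkeeping: one must trace through the normalization constants required to go from monic back to $P^{(\alpha,\beta)}$-form and confirm that the full leading coefficient of $P_{\lambda,\mu,n}^{(\alpha,\beta)}$ is a product of explicit nonzero factors. Each factor corresponds to one of the conditions (C1), (C2), (C3) applied to the original $r$ entries, together with the two new hypotheses, which supply the extra factors involving $s$ that arise from the $(r{+}1)$-st entry $P_s^{(\alpha,\beta)}$. The content of the lemma is thus that the two new hypotheses are exactly what is needed to keep these extra factors --- and hence the leading coefficient --- nonzero.
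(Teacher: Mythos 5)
The paper does not prove this lemma at all: it is imported verbatim from \cite[Lemma 2.13]{Bonn}, so there is no internal argument to compare yours against. That said, your strategy is the natural one and is essentially correct. The cleanest way to close it off is to make your first observation completely explicit: since $n\in\bbN_{\lambda,\mu}$ guarantees $s=n-|\lambda|-|\mu|+r_1\geq0$ and $s\neq\lambda_j+r_1-j$ for all $j$, the set $\{\lambda_j+r_1-j\}_{j=1}^{r_1}\cup\{s\}$ is the degree sequence of a partition $\lambda'$ with $r_1+1$ parts and $|\lambda'|=n-|\mu|$, and then $P_{\lambda,\mu,n}^{(\alpha,\beta)}=\pm\Omega_{\lambda',\mu}^{(\alpha,\beta)}$ exactly (the prefactor exponents $(\beta+r_1+1)r_2$ and $(\beta+r_1')r_2$ match). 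The two extra hypotheses of the lemma are precisely (C1) and (C3) for the pair $(\lambda',\mu)$, so the conclusion $\deg P_{\lambda,\mu,n}^{(\alpha,\beta)}=|\lambda'|+|\mu|=n$ follows from the degree formula for generalized Jacobi polynomials that the paper already takes as known. If instead you insist on reproving that degree formula, the one thin spot in your sketch is the claim that the top-order term is a Wronskian of pure monomials: the $\mu$-columns behave like $x^{\mu_j+r_2-j-\beta}$ at infinity, so the relevant quantity is a generalized Vandermonde $\prod_{i<j}(e_j-e_i)$ in exponents that are shifted by $-\beta$ in the $\mu$-family. Its nonvanishing is where (C3) and your second hypothesis enter (cross-family exponent coincidences), while (C1), (C2), and your first hypothesis guarantee the individual leading coefficients $\Gamma(\alpha+\beta+2k+1)/\Gamma(\alpha+\beta+k+1)$-type factors are nonzero; spelling out that asymptotic expansion is the ``bookkeeping'' you defer, and it is the only substantive content not reducible to the cited Lemma 2.3.
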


We note here that the $X_m$ exceptional Jacobi polynomials studied in \cite{DL,GMM,HS,H15,LLS,Lun} are special cases of this more general construction (see \cite[Section 5.2]{Bonn}).

The zeros of $P_{\lambda,\mu,n}^{(\alpha,\beta)}$ in $(-1,1)$ are called \textit{regular zeros}.  The following result was shown in \cite[Section 6.1]{Bonn}.

\begin{prop}[\cite{Bonn}]\label{new61}
If $\alpha>-1$, $\beta>\mu_1+r_2-1$, $\lambda$ is an even partition, and $\alpha$ and $\beta$ satisfy the conditions of Lemma \ref{new213} for all $n\in\bbN_{\lambda,\mu}$, then $P_{\lambda,\mu,n}^{(\alpha,\beta)}$ has exactly $n-|\lambda|-|\mu|$ simple regular zeros when $n$ is large.  The remaining $|\lambda|+|\mu|$ zeros of $P_{\lambda,\mu,n}^{(\alpha,\beta)}$ are called the \textit{exceptional zeros}.
\end{prop}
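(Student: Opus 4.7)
My plan is to combine $L^2$--orthogonality of the XOP family on $(-1,1)$ against a positive weight with a Chebyshev--Markov sign-change argument to control the regular zeros, and then invoke a Wronskian asymptotic to force the remaining zeros off $[-1,1]$ for $n$ large. Throughout I will write $N=|\lambda|+|\mu|$ for brevity.

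Under the standing hypotheses the denominator $\Omega_{\lambda,\mu}^{(\alpha,\beta)}$ has no zeros on $[-1,1]$, so the weight
\[
W_{\lambda,\mu}^{(\alpha,\beta)}(x) \;=\; \frac{(1-x)^{\alpha}(1+x)^{\beta}}{\bigl[\Omega_{\lambda,\mu}^{(\alpha,\beta)}(x)\bigr]^{2}}
\]
is strictly positive and integrable on $(-1,1)$. By the Darboux construction that underlies the Wronskian definition of $P_{\lambda,\mu,n}^{(\alpha,\beta)}$, the family $\{P_{\lambda,\mu,n}^{(\alpha,\beta)}\}_{n\in\bbN_{\lambda,\mu}}$ is mutually orthogonal with respect to $W_{\lambda,\mu}^{(\alpha,\beta)}\,dx$. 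I would then argue as follows: fix $n\in\bbN_{\lambda,\mu}$ with $n$ large, let $x_{1}<\cdots<x_{k}$ be the distinct sign-change points of $P_{\lambda,\mu,n}^{(\alpha,\beta)}$ in $(-1,1)$, and set $Q(x)=\prod_{i=1}^{k}(x-x_{i})$. The integrand $Q\cdot P_{\lambda,\mu,n}^{(\alpha,\beta)}\cdot W_{\lambda,\mu}^{(\alpha,\beta)}$ has constant sign on $(-1,1)$, so $\int_{-1}^{1} Q\,P_{\lambda,\mu,n}^{(\alpha,\beta)}\,W_{\lambda,\mu}^{(\alpha,\beta)}\,dx\neq 0$. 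For $n$ large enough that every element of $\bbN_{0}\setminus\bbN_{\lambda,\mu}$ is strictly less than $n$, the space $V_{<n}=\mathrm{span}\{P_{\lambda,\mu,m}^{(\alpha,\beta)}:m\in\bbN_{\lambda,\mu},\,m<n\}$ is an $(n-N)$-dimensional subspace of the polynomials of degree less than $n$, and $P_{\lambda,\mu,n}^{(\alpha,\beta)}$ is $W$--orthogonal to every element of $V_{<n}$. Comparing this codimension with the dimension of polynomials of degree $\leq k$ vanishing at prescribed points forces $k\geq n-N$, since otherwise one could realize $Q$ (possibly after adjusting it by a low-order polynomial that does not destroy the sign pattern) inside $V_{<n}$ and contradict the sign of the integral.

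Since $\deg P_{\lambda,\mu,n}^{(\alpha,\beta)}=n$ by Lemma \ref{new213}, at most $N$ zeros remain unaccounted for. To localize them outside $[-1,1]$, I would expand the Wronskian defining $P_{\lambda,\mu,n}^{(\alpha,\beta)}$ using the classical asymptotic of $P_{s}^{(\alpha,\beta)}(x)$ (with $s=n-N+r_{1}$) on compact subsets of $\bbC\setminus[-1,1]$ and retain the leading term, obtaining an identity of the form
\[
P_{\lambda,\mu,n}^{(\alpha,\beta)}(x) \;=\; c_{n}\,\Omega_{\lambda,\mu}^{(\alpha,\beta)}(x)\,P_{s}^{(\alpha,\beta)}(x)\,\bigl[1+o(1)\bigr] \qquad (\nri),
\]
uniformly for $x$ in compact subsets of $\bbC\setminus[-1,1]$, for an appropriate normalization $c_{n}$ extracted from the Wronskian of the highest-degree terms. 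Since $P_{s}^{(\alpha,\beta)}$ is nonvanishing off $[-1,1]$, Hurwitz's theorem forces the $N$ remaining zeros of $P_{\lambda,\mu,n}^{(\alpha,\beta)}$ to converge to the $N$ zeros of $\Omega_{\lambda,\mu}^{(\alpha,\beta)}$, all of which lie off $[-1,1]$. Hence for $n$ sufficiently large these exceptional zeros are uniformly bounded away from $[-1,1]$, every zero of $P_{\lambda,\mu,n}^{(\alpha,\beta)}$ in $(-1,1)$ is a regular zero, and, being a sign change, is simple; this gives exactly $n-N$ simple regular zeros.

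The main obstacle is the dimensional argument in the second paragraph: $V_{<n}$ is a specific codimension-$N$ subspace of the polynomials of degree $<n$, cut out by $N$ linear functionals that are not, a priori, satisfied by an arbitrary degree-$k$ polynomial. To make the sign-change count rigorous one must combine the dimension of $V_{<n}$ with a careful choice of $Q$ (or a suitable modification by lower-order terms) so that the modified $Q$ both lands in $V_{<n}$ and retains the constant-sign property against $P_{\lambda,\mu,n}^{(\alpha,\beta)}$; tracking the precise missing degrees $\{N+\lambda_{j}-j:j=1,\ldots,r_{1}\}$ in $\bbN_{0}\setminus\bbN_{\lambda,\mu}$ is essential to pin down which degree-$k$ polynomials actually lie in $V_{<n}$.
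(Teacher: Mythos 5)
First, a point of comparison: the paper offers no proof of this proposition at all --- it is imported verbatim from \cite[Section 6.1]{Bonn} --- so the only meaningful comparison is with Bonneux's argument. Your architecture (an interior count via orthogonality, an exterior count via ratio asymptotics and Hurwitz) is the right family of ideas, but the step you yourself flag as ``the main obstacle'' is a genuine gap, not a technicality, and it cannot be repaired along the lines you suggest. Writing $N=|\lambda|+|\mu|$, orthogonality of $P_{\lambda,\mu,n}^{(\alpha,\beta)}$ to the gapped span $V_{<n}$ does not force $k\geq n-N$ sign changes: the Chebyshev--Markov argument needs the specific polynomial $Q=\prod_i(x-x_i)$ (or $Q$ times a factor of constant sign on $(-1,1)$) to lie in $V_{<n}$, whereas the codimension count only produces \emph{some} element of $V_{<n}$ vanishing at the $x_i$, necessarily of the form $Q\cdot S$; since $S$ may itself change sign in $(-1,1)$, the integral against $P_{\lambda,\mu,n}^{(\alpha,\beta)}$ need not be nonzero. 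Adding a low-order polynomial to $Q$ destroys the vanishing at the $x_i$ and hence the constant-sign property, so that adjustment fails too. This is precisely why the literature does not argue via orthogonality to the exceptional family: instead one writes $P_{\lambda,\mu,n}^{(\alpha,\beta)}$ as a linear combination of boundedly many consecutive classical Jacobi polynomials with parameters $(\alpha+r,\beta+r)$ and invokes the Beardon--Driver theorem \cite{BD}, which yields only a weaker lower bound (the analogue of the $n-2|\lambda|-2|\mu|-r_2$ count quoted in Section \ref{xlagintro} for the Laguerre case); the sharp count $n-N$ is then recovered from the exterior analysis, not from orthogonality.

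Two further points need repair. First, ``being a sign change, is simple'' is false as stated: any odd-order zero is a sign change. Simplicity of the regular zeros comes instead from the second-order ODE satisfied by $P_{\lambda,\mu,n}^{(\alpha,\beta)}$, whose coefficients are regular on $(-1,1)$ because $\Omega_{\lambda,\mu}^{(\alpha,\beta)}$ does not vanish there (by evenness of $\lambda$ and the hypotheses), so a double zero in $(-1,1)$ would force $P_{\lambda,\mu,n}^{(\alpha,\beta)}\equiv0$ by uniqueness. Second, your Hurwitz step compares a degree-$n$ polynomial with $\Omega_{\lambda,\mu}^{(\alpha,\beta)}P_s^{(\alpha,\beta)}$, which has degree $n+r_1$, and --- more seriously --- uniform convergence on compact subsets of $\bbC\setminus[-1,1]$ does not by itself prevent exceptional zeros from accumulating on $[-1,1]$ or drifting to infinity. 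One needs the degree-matched ratio against $P_{n-N}^{(\alpha+r,\beta+r)}$ (this is essentially \cite[Theorem 6.5]{Bonn}) together with an a priori localization of the non-real zeros before Hurwitz can deliver that \emph{exactly} $N$ zeros lie off $[-1,1]$ for large $n$; only then does the total degree count close the argument.
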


If the hypotheses of Proposition \ref{new61} are satisfied, let us denote the regular zeros of $P_{\lambda,\mu,n}^{(\alpha,\beta)}(z)$ by $z_{1,n}^{(\alpha,\beta)},z_{2,n}^{(\alpha,\beta)},\ldots,z_{n-|\lambda|-|\mu|,n}^{(\alpha,\beta)}$ and the exceptional zeros by $\zeta_{1,n}^{(\alpha,\beta)},\ldots,\zeta_{|\lambda|+|\mu|,n}^{(\alpha,\beta)}$.  According to \cite[Theorem 6.6]{Bonn}, if the zeros of $\Omega_{\lambda,\mu}^{(\alpha,\beta)}$ are simple, then one can index the set $\{\zeta_{j,n}^{(\alpha,\beta)}\}_{j=1}^{|\lambda|+|\mu|}$ so that for each $1\leq k\leq |\lambda|+|\mu|$ there exists a complex number $\zeta_{k,\infty}^{(\alpha,\beta)}$ such that $\Omega_{\lambda,\mu}^{(\alpha,\beta)}(\zeta_{k,\infty}^{(\alpha,\beta)})=0$ and $\zeta_{k,n}^{(\alpha,\beta)}\rightarrow\zeta_{k,\infty}^{(\alpha,\beta)}$ as $\nri$.  It was shown in that same theorem that $|\zeta_{k,n}^{(\alpha,\beta)}-\zeta_{k,\infty}^{(\alpha,\beta)}|=O(n^{-1})$ as $\nri$, and it was also mentioned that this estimate is not known to be sharp \cite[Remark 6.7]{Bonn} (see also \cite{Lun}).  Our main result is the following theorem that shows that the estimate from \cite{Bonn} is sharp when $\lambda$ is even and we calculate the precise leading order asymptotics.

\begin{theorem}\label{jacthm}
If the hypotheses of Proposition \ref{new61} are satisfied and the zeros of $\Omega_{\lambda,\mu}^{(\alpha,\beta)}$ are simple, then
\[
\lim_{\nri}n[\zeta_{k,n}^{(\alpha,\beta)}-\zeta_{k,\infty}^{(\alpha,\beta)}]=\sqrt{\left(\zeta_{k,\infty}^{(\alpha,\beta)}\right)^2-1},
\]
where we take the branch of the square root that is positive on $[1,\infty)$ and with branch cut $[-1,1]$.
\end{theorem}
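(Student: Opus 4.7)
The plan is to expand the defining Wronskian of $P_{\lambda,\mu,n}^{(\alpha,\beta)}$ along the column containing $P_s^{(\alpha,\beta)}$ and its derivatives, factor out the leading-order derivative $P_s^{(r)}$, and perform a local analysis at $\zeta_\infty:=\zeta_{k,\infty}^{(\alpha,\beta)}$. Two pieces of machinery drive the calculation: iterating (\ref{jacdiff}) gives $\frac{d^k}{dx^k}P_s^{(\alpha,\beta)}(x) = c_k(s)\, P_{s-k}^{(\alpha+k,\beta+k)}(x)$ with $c_k(s) = \prod_{j=1}^k (\alpha+\beta+s+j)/2 \sim (s/2)^k$, and the identity $(\mbox{Wr}[f_1,\ldots,f_r])' = \det(M_{r-1})$, where $M_{r-1}$ is the submatrix of the $(r+1)\times(r+1)$ Wronskian matrix obtained by striking the row of $(r-1)$-th derivatives and the last column (this follows from Jacobi's formula, since every other differentiated row produces a repeated row).

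Set $W(x):=\mbox{Wr}[f_1,\ldots,f_r](x)$, so that $\Omega_{\lambda,\mu}^{(\alpha,\beta)}(x) = (1+x)^{(\beta+r_1)r_2}W(x)$. Laplace expansion of the full Wronskian along the last column, followed by factoring out $P_s^{(r)}$ and using $\det(M_r)=W$, $\det(M_{r-1})=W'$, produces
\[
P_{\lambda,\mu,n}^{(\alpha,\beta)}(x) = (1+x)^{(\beta+r_1+1)r_2}P_s^{(r)}(x)\Bigl[W(x) - \frac{P_s^{(r-1)}(x)}{P_s^{(r)}(x)}W'(x) + O(n^{-2})\Bigr],
\]
the remainder being uniform on compact subsets of $\bbC\setminus[-1,1]$ because $P_s^{(r-m)}(x)/P_s^{(r)}(x) = O(n^{-m})$ on such sets.

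Now write $\zeta_n=\zeta_\infty + \epsilon_n$ with $P_{\lambda,\mu,n}^{(\alpha,\beta)}(\zeta_n)=0$. Since $\zeta_\infty\notin[-1,1]$ (in particular $1+\zeta_\infty\neq 0$), we have $W(\zeta_\infty)=0$, and since $\Omega_{\lambda,\mu}^{(\alpha,\beta)}{}'(\zeta_\infty)=(1+\zeta_\infty)^{(\beta+r_1)r_2}W'(\zeta_\infty)$, the simplicity hypothesis forces $W'(\zeta_\infty)\neq 0$. For large $n$, $P_s^{(r)}(\zeta_n)\neq 0$ because the zeros of classical Jacobi polynomials lie in $(-1,1)$. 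Dividing by the non-vanishing outer factors and Taylor-expanding $W(\zeta_n)=W'(\zeta_\infty)\epsilon_n+O(\epsilon_n^2)$ yields
\[
\epsilon_n = \frac{P_s^{(r-1)}(\zeta_n)}{P_s^{(r)}(\zeta_n)} + O(\epsilon_n^2) + O(n^{-2}).
\]
The a priori estimate $\epsilon_n=O(n^{-1})$ from \cite[Theorem 6.6]{Bonn} makes both error terms $O(n^{-2})$, so the theorem reduces to computing $\lim_{n\to\infty} n\cdot P_s^{(r-1)}(\zeta_n)/P_s^{(r)}(\zeta_n)$.

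For this final computation, (\ref{jacdiff}) gives
\[
\frac{P_s^{(r-1)}(z)}{P_s^{(r)}(z)} = \frac{2}{\alpha+\beta+s+r}\cdot\frac{P_{s-r+1}^{(\alpha+r-1,\beta+r-1)}(z)}{P_{s-r}^{(\alpha+r,\beta+r)}(z)},
\]
and one invokes the Darboux asymptotic for Jacobi polynomials outside $[-1,1]$, namely $P_n^{(\alpha,\beta)}(z)\sim (2\pi n)^{-1/2}\,2^{(\alpha+\beta)/2}\phi(z)^{n+(\alpha+\beta+1)/2}(z-1)^{-\alpha/2-1/4}(z+1)^{-\beta/2-1/4}$, where $\phi(z)=z+\sqrt{z^2-1}$ carries the branch specified in the theorem. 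Shifting both parameters by $1$ changes the prefactor by $\sqrt{z^2-1}/(2\phi(z))$, while a unit increase in degree contributes $\phi(z)$, so the Jacobi ratio tends to $\sqrt{z^2-1}/2$ and the full ratio equals $\sqrt{z^2-1}/n\,(1+O(n^{-1}))$. Evaluating at $z=\zeta_n$ and using continuity of the square root at $\zeta_\infty$ gives $n\epsilon_n\to\sqrt{\zeta_\infty^2-1}$. I expect the main obstacle to be the careful bookkeeping of the power-of-$2$ prefactor in the Darboux asymptotic, since a miscount by $\sqrt{2}$ would produce the wrong numerical coefficient; a smaller point is verifying that $\zeta_\infty\neq -1$, which should follow from the polynomial structure of $\Omega_{\lambda,\mu}^{(\alpha,\beta)}$ under the standing hypotheses on $\alpha,\beta$.
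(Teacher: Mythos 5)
Your proposal is correct, but it takes a genuinely different route from the paper. The paper's proof starts from the electrostatic partial-fraction identity \cite[Equation 7.20]{Bonn}, obtained by evaluating the second-order equation satisfied by $P_{\lambda,\mu,n}^{(\alpha,\beta)}$ at the limit point $\zeta_{k,\infty}^{(\alpha,\beta)}$: after a Schr\"odinger-operator argument (Lemma \ref{jacpole}) showing that $P_{\lambda,\mu,n}^{(\alpha,\beta)}$ cannot vanish at a zero of $\Omega_{\lambda,\mu}^{(\alpha,\beta)}$, one divides that identity by $n$, identifies the surviving sum $\tfrac{1}{n}\sum_j(\zeta_{k,\infty}^{(\alpha,\beta)}-z_{j,n}^{(\alpha,\beta)})^{-1}$ as the Cauchy transform of the arcsine law, namely $((\zeta_{k,\infty}^{(\alpha,\beta)})^2-1)^{-1/2}$, and reads off the limit from the reciprocal of the lone surviving exceptional term. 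You instead expand the defining Wronskian along its last column, isolate the two leading cofactors $W$ and $W'$, and reduce everything to the off-interval Darboux asymptotics of classical Jacobi polynomials; your bookkeeping is correct --- the sign of the $W'$ cofactor, the constants $c_k(s)$, and the normalization $2^{(\alpha+\beta)/2}$ in the Darboux formula (which checks out against the Legendre and Chebyshev special cases) combine to give $c_{r-1}(s)/c_r(s)\cdot\sqrt{z^2-1}/2\sim\sqrt{z^2-1}/n$, exactly the claimed limit. What each approach buys: the paper's route needs only the weak convergence of the regular-zero counting measure rather than precise outer asymptotic constants, and it transfers almost verbatim to the Laguerre and Hermite cases; your route is self-contained at the level of the Wronskian definition, makes the non-vanishing $\zeta_{k,n}^{(\alpha,\beta)}\neq\zeta_{k,\infty}^{(\alpha,\beta)}$ fall out automatically rather than requiring a separate pole argument, and in principle yields the next-order correction since your error term is an explicit $O(n^{-2})$. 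The one loose end you flag yourself, that $\zeta_{k,\infty}^{(\alpha,\beta)}\neq -1$, is immediate from \cite[Lemma 2.8]{Bonn}, which under the standing hypotheses excludes all of $[-1,1]$ from the zero set of $\Omega_{\lambda,\mu}^{(\alpha,\beta)}$.
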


The assumption that the zeros of $\Omega_{\lambda,\mu}^{(\alpha,\beta)}$ are simple is straightforward to check numerically for any specific choice of $\alpha$ and $\beta$ satisfying the hypotheses of Theorem \ref{jacthm}.  Of course it is always true when $|\lambda|+|\mu|=1$ (which would force $\lambda=\emptyset$ if $\lambda$ is even).% and one can check that it holds when $m=2$ as long as $\alpha\neq1$ and $\alpha-\beta\neq1$.

\subsection{Exceptional Laguerre Polynomials}\label{xlag}

The exceptional Laguerre polynomials are generalizations of the classical Laguerre polynomials that are orthogonal on $[0,\infty)$ with an appropriate weight.  We will be interested in studying properties of their zeros.  In \cite[Theorem 5]{BK18}, it was shown that the exceptional zeros of certain exceptional Laguerre polynomials tend to their limit points at a rate of $O(n^{-1/2})$ as $\nri$, where $n$ is the degree of the polynomial.  We will show that this result is sharp and provide the leading order of the asymptotics in many cases.

We begin our discussion with a review of some important properties of classical and generalized Laguerre polynomials and then discuss the exceptional Laguerre polynomials.

\subsubsection{Classical and Generalized Laguerre Polynomials}\label{lagclass}

For any $\alpha\in(-1,\infty)$, the Laguerre polynomials $\{L_n^{(\alpha)}\}_{n=0}^{\infty}$ are an orthogonal sequence with respect to the weight $x^{\alpha}e^{-x}$ on the interval $(0,\infty)$.  The degree $n$ polynomial $L_n^{(\alpha)}(x)$ satisfies the ODE
\[
xy''+(\alpha+1-x)y'+ny=0.
\]
This ODE only defines each Laguerre polynomial up to a multiplicative constant, so we will specify
\[
L_n^{(\alpha)}(z)=\sum_{j=0}^n\binom{n+\alpha}{n-j}\frac{(-z)^j}{j!},
\]
which allows us to define $L_n^{(\alpha)}(z)$ for all complex numbers $\alpha$.

There are several properties of the Laguerre polynomials that will be relevant to our analysis.  The first is the Hahn property that
\begin{equation}\label{hahn}
\frac{d}{dx}L_n^{(\alpha)}(x)=-L_{n-1}^{(\alpha+1)}(x),\qquad n\geq1.
\end{equation}
The second is a ratio asymptotic result from \cite{DHM}, which we will shortly state precisely.  We should mention that the result \cite[Theorem 3]{DHM} is much stronger than what we will state here, which is a consequence of that result that is strong enough for our calculations.

\begin{theorem}\label{dhmthm}[\cite{DHM}]
If $\alpha,\beta>-1$ are fixed, $j\in\bbZ$ is fixed, and $z\in\bbC\setminus[0,\infty)$ is fixed, then
\[
\frac{L_{n+j}^{(\alpha)}(z)}{L_{n}^{(\beta)}(z)}=\left(-\frac{z}{n}\right)^{\frac{\beta-\alpha}{2}}\left(1+O(n^{-1/2})\right)+O\left(\frac{1}{n}\right)
\]
as $\nri$.  The error terms hold uniformly for $z$ in compact subsets of $\bbC\setminus[0,\infty)$.
\end{theorem}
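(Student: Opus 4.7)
The plan is to derive this ratio asymptotic as a straightforward corollary of the sharp Perron-type pointwise asymptotic for Laguerre polynomials established in \cite[Theorem 3]{DHM}. That result provides, for fixed $\gamma\in\bbR$, the uniform-on-compacts expansion
$$
L_n^{(\gamma)}(z) = \frac{e^{z/2}}{2\sqrt{\pi}}\,(-z)^{-\gamma/2 - 1/4}\, n^{\gamma/2 - 1/4}\, e^{2\sqrt{-nz}}\bigl(1 + O(n^{-1/2})\bigr),\qquad \nri,
$$
on any compact subset of $\bbC\setminus[0,\infty)$, where $\sqrt{\cdot}$ is the branch positive on the negative real axis. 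Invoking this is the substantive input; everything else is manipulation.

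First I would apply the asymptotic formula separately to the numerator with parameters $(n+j,\alpha)$ and to the denominator with parameters $(n,\beta)$. The factors of $e^{z/2}/(2\sqrt{\pi})$ cancel out of the quotient, and grouping the remaining terms yields
$$
\frac{L_{n+j}^{(\alpha)}(z)}{L_n^{(\beta)}(z)} = (-z)^{(\beta-\alpha)/2}\cdot\frac{(n+j)^{\alpha/2-1/4}}{n^{\beta/2-1/4}}\cdot e^{2\sqrt{-z}\bigl(\sqrt{n+j}-\sqrt{n}\bigr)}\cdot\frac{1 + O(n^{-1/2})}{1 + O(n^{-1/2})}.
$$

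Second I would expand each factor on the right using standard Taylor estimates. From $\sqrt{n+j}-\sqrt{n} = j/(2\sqrt{n}) + O(n^{-3/2})$, which holds since $j$ is fixed, I get $e^{2\sqrt{-z}(\sqrt{n+j}-\sqrt{n})} = 1 + O(n^{-1/2})$ uniformly for $z$ in the compact set. From $(1+j/n)^{\alpha/2-1/4} = 1 + O(1/n)$ I get
$$
\frac{(n+j)^{\alpha/2-1/4}}{n^{\beta/2-1/4}} = n^{(\alpha-\beta)/2}\bigl(1 + O(1/n)\bigr).
$$
Pairing the factor $(-z)^{(\beta-\alpha)/2}$ with $n^{(\alpha-\beta)/2} = n^{-(\beta-\alpha)/2}$ reassembles the leading factor $\left(-\frac{z}{n}\right)^{(\beta-\alpha)/2}$ prescribed by the statement.

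Third, consolidating all multiplicative correction factors of size $1 + O(n^{-1/2})$ and $1 + O(1/n)$ into a single $1 + O(n^{-1/2})$ gives
$$
\frac{L_{n+j}^{(\alpha)}(z)}{L_n^{(\beta)}(z)} = \left(-\frac{z}{n}\right)^{(\beta-\alpha)/2}\bigl(1 + O(n^{-1/2})\bigr),
$$
and the stated form, which also tolerates an additive $O(1/n)$ error, follows at once. Uniformity on compact subsets of $\bbC\setminus[0,\infty)$ is inherited directly from the uniformity of the Perron asymptotics, since only finitely many multiplicative manipulations are performed. The main obstacle is really just careful bookkeeping: the prefactor $(-z/n)^{(\beta-\alpha)/2}$ may tend to $0$ (if $\beta>\alpha$) or blow up (if $\beta<\alpha$) as $\nri$, so one should confirm that the multiplicative error $O(n^{-1/2})$ is measured relative to this leading term, with the additive $O(1/n)$ absorbing any terms not dominated by it when $\beta-\alpha$ is large and positive.
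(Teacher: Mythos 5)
Your derivation is correct and follows the same route the paper intends: the paper states this theorem as a direct consequence of \cite[Theorem 3]{DHM} without writing out the deduction, and your computation from the Perron-type outer asymptotics supplies exactly the routine bookkeeping that is being elided. In fact you obtain the slightly stronger purely multiplicative form $\left(-z/n\right)^{(\beta-\alpha)/2}\bigl(1+O(n^{-1/2})\bigr)$, which trivially implies the stated version with its extra additive $O(1/n)$ term.
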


We will also need some understanding of the zeros of $L_n^{(\alpha)}$.  These zeros are all in $(0,\infty)$, are simple, and the zeros of $L_n^{(\alpha)}$ interlace those of $L_{n+1}^{(\alpha)}$.  As $\nri$, the zeros tend to accumulate to infinity, meaning that any fixed compact subset of $[0,\infty)$ contains only $o(n)$ zeros of $L_n^{(\alpha)}$ as $\nri$.  In fact, using results from \cite{G02}, one can show that any fixed compact subset of $[0,\infty)$ contains at most $O(\sqrt{n})$ zeros of $L_n^{(\alpha)}$ as $\nri$.

Our analysis will also require some detailed knowledge of generalized Laguerre polynomials, which we now define.  To do so, we must first fix two partitions $\lambda=(\lambda_1,\ldots,\lambda_{r_1})$ and $\mu=(\mu_1,\ldots,\mu_{r_2})$ as in the construction of the generalized Jacobi polynomials.  If we set $r=r_1+r_2$, then we can define the generalized Laguerre polynomial $\Omega_{\lambda,\mu}^{(\alpha)}$ by the Wronskian determinant
\[
\Omega_{\lambda,\mu}^{(\alpha)}(x)=e^{-r_2x}\cdot\mbox{Wr}[f_1,\ldots,f_r]
\]
where
\begin{align*}
f_j(x)&=L_{\lambda_j+r_1-j}^{(\alpha)}(x),\qquad\qquad j=1,\ldots,r_1\\
f_{r_1+j}(x)&=e^xL_{\mu_j+r_2-j}^{(\alpha)}(-x),\qquad\qquad j=1,\ldots,r_2
\end{align*}
According to \cite[Lemma 1]{BK18}, $\Omega_{\lambda,\mu}^{(\alpha)}(x)$ is a polynomial of degree exactly $|\lambda|+|\mu|$.  According to \cite{D14a,DP}, if $\alpha>-1$, then the polynomial $\Omega_{\lambda,\mu}^{(\alpha)}(x)$ has no zeros in $[0,\infty)$ if and only if $\lambda$ is an even partition.

\subsubsection{Exceptional Laguerre Polynomials}\label{xlagintro}

Now we will show how to construct the exceptional Laguerre polynomials by following the construction in \cite{BK18}.  Let us retain the notation from the previous subsection.  Let us also define $\bbN_{\lambda,\mu}$ as in the construction of the exceptional Jacobi polynomials.  If $\alpha\in\bbR$ and $n\in\bbN_{\lambda,\mu}$, one can define the exceptional Laguerre polynomial $L_{\lambda,\mu,n}^{(\alpha)}$ by
\[
L_{\lambda,\mu,n}^{(\alpha)}(x)=e^{-r_2x}\cdot\mbox{Wr}[f_1,\ldots,f_r,L_s^{(\alpha)}(x)],
\]
where $s=n-|\lambda|-|\mu|+r_1$ (see \cite[Definition 4]{BK18}).  By construction, the degree of $L_{\lambda,\mu,n}^{(\alpha)}$ is exactly $n$.  We note here that the $X_m$ exceptional Laguerre polynomials of Type-$I$, Type-$II$, and Type-$III$ studied in \cite{GMM,HS,H18,LLMS,Lun} are special cases of this more general construction (see \cite[Proposition 4]{BK18}).

We will be interested in the zeros of $L_{\lambda,\mu,n}^{(\alpha)}$.  It was shown in \cite[Section 6.1]{BK18} that if $\alpha>-1$ and $\lambda$ is even, then the polynomial $L_{\lambda,\mu,n}^{(\alpha)}$ has $n-|\lambda|-|\mu|$ simple zeros in the interval $(0,\infty)$ when $n$ is large (we call these zeros \textit{regular zeros}).  It was shown in \cite[Section 6.4]{BK18} that if the zeros of $\Omega_{\lambda,\mu}^{(\alpha)}$ are simple and $\lambda$ is even, then there are exactly $|\lambda|+|\mu|$ zeros of $L_{\lambda,\mu,n}^{(\alpha)}$ in $\bbC\setminus[0,\infty)$ when $n$ is large (we call these zeros \textit{exceptional zeros}).  Concerning the regular zeros, an even more precise result was given in \cite[Corollary 2]{BK18} (which depends on a result from \cite[Section 3]{BD}).  That corollary states that if $n$ is large and $\{\sigma_{j,n}^{(\alpha+r)}\}_{j=1}^n$ are the zeros of $L_n^{(\alpha+r)}$ in increasing order, then at least $n-2|\lambda|-2|\mu|-r_2$ intervals $(\sigma_{j,n}^{(\alpha+r)},\sigma_{j+1,n}^{(\alpha+r)})$ contain a zero of $L_{\lambda,\mu,n}^{(\alpha)}$.

The exceptional zeros of $L_{\lambda,\mu,n}^{(\alpha)}$ also behave predictably when $\alpha>-1$, $\lambda$ is even, and the zeros of $\Omega_{\lambda,\mu}^{(\alpha)}$ are simple.  If we denote these exceptional zeros by $\{\zeta_{k,n}^{(\alpha)}\}_{k=1}^{|\lambda|+|\mu|}$, then it was shown in \cite[Theorem 5]{BK18} that one can index the exceptional zeros so that for each $k=1,\ldots,m$ there is a $\zeta_{k,\infty}^{(\alpha)}$ such that $\Omega_{\lambda,\mu}^{(\alpha)}(\zeta_{k,\infty}^{(\alpha)})=0$ and $\zeta_{k,n}^{(\alpha)}\rightarrow\zeta_{k,\infty}^{(\alpha)}$ at the rate $O(n^{-1/2})$ as $\nri$.  In \cite[Remark 6.7]{Bonn} it was mentioned that this estimate is not known to be sharp.  Our main result is the following theorem that shows that the estimate from \cite{BK18} is sharp when $\lambda$ is even and we calculate the precise leading order asymptotics.

\begin{theorem}\label{lagthm}
If $\alpha>-1$, $\lambda$ is even, and the zeros of $\Omega_{\lambda,\mu}^{(\alpha)}$ are simple, then
\[
\lim_{\nri}\sqrt{n}[\zeta_{k,n}^{(\alpha)}-\zeta_{k,\infty}^{(\alpha)}]=-\sqrt{-\zeta_{k,\infty}^{(\alpha)}},
\]
where we take the branch of the square root that is positive on $(0,\infty)$ and with branch cut $(-\infty,0]$.
\end{theorem}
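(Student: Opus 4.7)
The plan is to expand the $(r+1)\times(r+1)$ Wronskian $\mbox{Wr}[f_1,\ldots,f_r,L_s^{(\alpha)}](x)$ appearing in the definition of $L_{\lambda,\mu,n}^{(\alpha)}$ along its last column, use the Hahn property $(L_s^{(\alpha)})^{(i)}(x)=(-1)^{i}L_{s-i}^{(\alpha+i)}(x)$, and then apply the ratio asymptotic of Theorem \ref{dhmthm} to separate the dominant Laguerre factor from the rest and extract an explicit $n^{-1/2}$ correction to $\Omega_{\lambda,\mu}^{(\alpha)}$. Recall that $s-r=n-|\lambda|-|\mu|-r_2\to\infty$ at the same rate as $n$.

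First, write $W_0(x)=\mbox{Wr}[f_1,\ldots,f_r](x)$ and let $N_j(x)$ denote the $r\times r$ minor obtained by deleting the $(j+1)$-th row and the last column. A cofactor expansion together with the Hahn property yields
\[
\mbox{Wr}[f_1,\ldots,f_r,L_s^{(\alpha)}](x)=(-1)^r\sum_{j=0}^{r}L_{s-j}^{(\alpha+j)}(x)\,N_j(x),
\]
where $N_r(x)=W_0(x)$ and $N_{r-1}(x)=W_0'(x)$; the latter equality holds because differentiating a Wronskian of $r$ functions only changes the last row. The minors $N_j$ for $j\le r-2$ are polynomials in $x$ whose coefficients are independent of $n$, so they are uniformly bounded on any compact set.

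Next, apply Theorem \ref{dhmthm} with $n\mapsto s-r$ to obtain
\[
\frac{L_{s-j}^{(\alpha+j)}(x)}{L_{s-r}^{(\alpha+r)}(x)}=\left(\frac{-x}{n}\right)^{(r-j)/2}(1+O(n^{-1/2}))+O(n^{-1}),
\]
uniformly on compact subsets of $\bbC\setminus[0,\infty)$. Factoring $L_{s-r}^{(\alpha+r)}(x)$ out of the sum, observing that all $j\le r-2$ terms collapse into an $O(n^{-1})$ remainder, and using $\Omega_{\lambda,\mu}^{(\alpha)}(x)=e^{-r_2x}W_0(x)$ together with $e^{-r_2x}W_0'(x)=(\Omega_{\lambda,\mu}^{(\alpha)})'(x)+r_2\Omega_{\lambda,\mu}^{(\alpha)}(x)$, I would arrive at the key expansion
\[
L_{\lambda,\mu,n}^{(\alpha)}(x)=(-1)^{r}L_{s-r}^{(\alpha+r)}(x)\Bigl[\Omega_{\lambda,\mu}^{(\alpha)}(x)+\frac{\sqrt{-x}}{\sqrt{n}}\bigl((\Omega_{\lambda,\mu}^{(\alpha)})'(x)+r_2\Omega_{\lambda,\mu}^{(\alpha)}(x)\bigr)+O(n^{-1})\Bigr],
\]
valid uniformly on compact subsets of $\bbC\setminus[0,\infty)$, where the branch of $\sqrt{\,\cdot\,}$ is the principal one.

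Finally, since $L_{s-r}^{(\alpha+r)}$ has all its zeros on $[0,\infty)$, an exceptional zero $\zeta_{k,n}^{(\alpha)}\in\bbC\setminus[0,\infty)$ must be a zero of the bracket. Invoking the a priori bound $\zeta_{k,n}^{(\alpha)}-\zeta_{k,\infty}^{(\alpha)}=O(n^{-1/2})$ from \cite{BK18}, Taylor expanding $\Omega_{\lambda,\mu}^{(\alpha)}$ at its simple zero $\zeta_{k,\infty}^{(\alpha)}$, and noting that the $r_2\Omega_{\lambda,\mu}^{(\alpha)}(\zeta_{k,n}^{(\alpha)})$ term is itself $O(n^{-1/2})$ and so absorbed, the vanishing-bracket equation reduces to
\[
(\Omega_{\lambda,\mu}^{(\alpha)})'(\zeta_{k,\infty}^{(\alpha)})\bigl(\zeta_{k,n}^{(\alpha)}-\zeta_{k,\infty}^{(\alpha)}\bigr)+\frac{\sqrt{-\zeta_{k,\infty}^{(\alpha)}}}{\sqrt{n}}(\Omega_{\lambda,\mu}^{(\alpha)})'(\zeta_{k,\infty}^{(\alpha)})+O(n^{-1})=0.
\]
Dividing through by the nonzero derivative and multiplying by $\sqrt{n}$ delivers the stated limit. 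The main obstacle, as I see it, is the bookkeeping of errors in Theorem \ref{dhmthm}: the multiplicative $(1+O(n^{-1/2}))$ must be shown to interact safely with the additive $O(n^{-1})$ so that the whole $j\le r-2$ tail and all corrections to the $j=r-1$ term are genuinely subdominant to the $n^{-1/2}$ correction being extracted, something that works only because we first factor out the single dominant Laguerre factor $L_{s-r}^{(\alpha+r)}(x)$ before appealing to the asymptotics.
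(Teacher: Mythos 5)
Your argument is correct, but it follows a genuinely different route from the paper. The paper never touches the Wronskian directly: it starts from the electrostatic partial-fractions identity \cite[Equation 105]{BK18} evaluated at $\zeta_{k,\infty}^{(\alpha)}$, which forces it to first prove that $L_{\lambda,\mu,n}^{(\alpha)}$ and $\Omega_{\lambda,\mu}^{(\alpha)}$ share no zeros (via a pole-order argument on the associated Schr\"odinger potential), then to convert the sum over regular zeros into a sum over zeros of the classical $L_n^{(\alpha+r)}$ by an interlacing-comparison lemma (Lemma \ref{sjtj} together with \cite[Corollary 2]{BK18}), and finally to evaluate that sum as a logarithmic derivative using Theorem \ref{dhmthm}. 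You instead expand the defining determinant along its last column, use Hahn's identity, and apply the same ratio asymptotics termwise to get a two-term uniform expansion $L_{\lambda,\mu,n}^{(\alpha)}(x)=(-1)^rL_{s-r}^{(\alpha+r)}(x)\bigl[\Omega_{\lambda,\mu}^{(\alpha)}(x)+n^{-1/2}\sqrt{-x}\,\bigl((\Omega_{\lambda,\mu}^{(\alpha)})'(x)+r_2\Omega_{\lambda,\mu}^{(\alpha)}(x)\bigr)+O(n^{-1})\bigr]$ on compacts of $\bbC\setminus[0,\infty)$, then localize at the simple zero. Your error bookkeeping does close: for $j=r-1$ the multiplicative $O(n^{-1/2})$ hits a term that is already $O(n^{-1/2})$, and for $j\le r-2$ the exponent $(r-j)/2\ge1$ makes the whole term $O(n^{-1})$, so everything subleading is genuinely absorbed. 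Two small points of hygiene: the minors $N_j$ are $e^{r_2x}$ times $n$-independent polynomials rather than polynomials (which is why the factor $e^{-r_2x}$ must be carried along before bounding them, and is harmless on compacts), and you should note explicitly that $\zeta_{k,n}^{(\alpha)}$ eventually lies in a fixed compact subset of $\bbC\setminus[0,\infty)$ so that the uniform expansion may be evaluated there. What your route buys is a stronger intermediate result --- a uniform asymptotic expansion of the polynomial itself, from which convergence of the exceptional zeros and the $O(n^{-1/2})$ a priori bound could be rederived by Rouch\'e rather than imported from \cite{BK18} --- at the cost of determinant bookkeeping; the paper's route is less self-contained but transfers verbatim to the Type-$III$ Laguerre and Hermite cases, where the same ODE-based electrostatic template applies.
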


\noindent\textit{Remark.}  The assumption of simple zeros in Theorem \ref{lagthm} is not a strong one.  Indeed, it is conjectured in \cite{BK18} that the other assumptions in Theorem \ref{lagthm} are sufficient to guarantee that the zeros of $\Omega_{\lambda,\mu}^{(\alpha)}$ are simple (see \cite[Conjecture 1]{BK18}).

%\noindent\textit{Remark.}  The fact that the limit in Theorem \ref{lagthm} is negative is consistent with a result proven in the $X_1$ case by Lun in \cite{Lun}.

\medskip

The first part of \cite[Proposition 4]{BK18} tell us that Theorem \ref{lagthm} applies to the Type-$I$ exceptional Laguerre polynomials that were studied in \cite{GMM,LLMS,Lun} (the simplicity of the zeros of $\Omega_{\lambda,\mu}^{(\alpha)}$ in this case follows from \cite[Proposition 3.4]{GMM}).  The second part of \cite[Proposition 4]{BK18} tell us that Theorem \ref{lagthm} applies to the Type-$II$ exceptional Laguerre polynomials that were studied in \cite{GMM,LLMS} as long as the zeros of $L_m^{(-\alpha-1)}$ are simple for $\alpha>m-1$.  The third part of \cite[Proposition 4]{BK18} tells us that the Type-$III$ exceptional Laguerre polynomials introduced in \cite{LLMS} need not satisfy the hypotheses of Theorem \ref{lagthm} because the partition $\lambda$ need not be even.  Nevertheless, a comparable result does hold.

Fix some $m\in\bbN$ and $\alpha\in(-1,0)$.  The Type-$III$ exceptional Laguerre polynomials are defined by
\[
L_{m,n}^{III(\alpha)}(x)=-nL_{(1,\ldots,1),\emptyset,n}^{(\alpha-m)}(x),\qquad n>m
\]
(where $r_1=m$) and $L_{m,0}^{III(\alpha)}(x)=1$ (see \cite[Proposition 4]{BK18}).  If $m$ is not even, then the partition $(1,\ldots,1)$ is not an even partition.

The polynomial $L_{m,n}^{III(\alpha)}(x)$ is a solution of the ODE
\begin{align}\label{xlag3ode}
y''+\left(\frac{\alpha+1-x}{x}-\frac{2L_m^{(-\alpha-1)}(-x)'}{L_m^{(-\alpha-1)}(-x)}\right)y'+\left(\frac{n}{x}\right)y=0
\end{align}
If $n>m$, the polynomial $L_{m,n}^{III(\alpha)}$ has $n-m$ simple zeros in the interval $(0,\infty)$ (the regular zeros) and $m$ simple zeros in $(-\infty,0)$ (the exceptional zeros) (see \cite{LLMS}).  To be more precise, the regular zeros of $L_{m,n}^{III(\alpha)}$ interlace the zeros of $L_{n-m-1}^{(\alpha+1)}$ (see \cite[Theorem 5.5]{LLMS}).  This interlacing property tells us that the asymptotic distribution of the regular zeros of $L_{m,n}^{III(\alpha)}$ for large $n$ is very similar to the distribution of zeros of classical Laguerre polynomials.

The exceptional zeros of $L_{m,n}^{III(\alpha)}$ also behave predictably.  If we denote these zeros in increasing order by $\{\zeta_{k,n}^{III(\alpha)}\}_{k=1}^m$, then it was shown in \cite[Theorem 5.6]{LLMS} that for each $k=1,\ldots,m$ there is a negative real number $\zeta_{k,\infty}^{III(\alpha)}$ such that $L_m^{(-\alpha-1)}(-\zeta_{k,\infty}^{III(\alpha)})=0$ and it follows from \cite[Theorem 5]{BK18} that $|\zeta_{k,n}^{III(\alpha)}-\zeta_{k,\infty}^{III(\alpha)}|=O(n^{-1/2})$ as $\nri$.  We can extend Theorem \ref{lagthm} to this additional case to show that the estimate from \cite[Theorem 5]{BK18} is sharp in this case as well.

\begin{theorem}\label{lag3thm}
It holds that
\[
\lim_{\nri}\sqrt{n}[\zeta_{k,n}^{III(\alpha)}-\zeta_{k,\infty}^{III(\alpha)}]=-\sqrt{-\zeta_{k,\infty}^{III(\alpha)}},
\]
where we take the branch of the square root that is positive on $(0,\infty)$ and with branch cut $(-\infty,0]$.
\end{theorem}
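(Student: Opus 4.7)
The plan is to adapt the argument of Theorem \ref{lagthm}: the hypothesis that $\lambda$ be even is used there only to guarantee that the exceptional zeros of $L_{\lambda,\mu,n}^{(\alpha)}$ lie in $\bbC\setminus[0,\infty)$, so that the ratio asymptotic in Theorem \ref{dhmthm} can be applied in a neighborhood of each limit point. In the Type-$III$ setting the analogous localization is already provided by \cite{LLMS}: every $\zeta_{k,\infty}^{III(\alpha)}$ lies in $(-\infty,0)$ and each $\zeta_{k,n}^{III(\alpha)}$ stays close to its limit point on the negative real axis, so Theorem \ref{dhmthm} again applies uniformly on complex neighborhoods of each $\zeta_{k,\infty}^{III(\alpha)}$.

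Concretely, I would cofactor-expand the Wronskian defining
\[
L_{m,n}^{III(\alpha)}(x)\;=\;-n\cdot\mbox{Wr}\bigl[L_m^{(\alpha-m)}(x),\,L_{m-1}^{(\alpha-m)}(x),\,\ldots,\,L_1^{(\alpha-m)}(x),\,L_n^{(\alpha-m)}(x)\bigr]
\]
along its last column and reduce each derivative using \eqref{hahn}. A short sign computation then yields the decomposition
\[
L_{m,n}^{III(\alpha)}(x) \;=\; (-1)^m n \sum_{k=0}^m M_k(x)\,L_{n-k}^{(\alpha-m+k)}(x),
\]
where $M_k(x)$ is the $m\times m$ minor of the first $m$ columns obtained by deleting row $k$. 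In particular, $M_m(x)=\mbox{Wr}[L_m^{(\alpha-m)}(x),\ldots,L_1^{(\alpha-m)}(x)]$ equals $\Omega_{(1,\ldots,1),\emptyset}^{(\alpha-m)}(x)$ up to a nonzero constant, which itself is a nonzero constant multiple of $L_m^{(-\alpha-1)}(-x)$. Moreover, since the derivative of each row of the Wronskian matrix is literally the next row, Jacobi's differentiation rule for determinants combined with the vanishing of determinants having repeated rows yields the key identity
\[
M_m'(x) \;=\; M_{m-1}(x).
\]
Applying Theorem \ref{dhmthm} to each ratio $L_{n-k}^{(\alpha-m+k)}(x)/L_{n-m}^{(\alpha)}(x)$ and summing, one obtains, uniformly in a complex neighborhood of $\zeta_{k,\infty}^{III(\alpha)}$, an expansion of the form
\[
\frac{L_{m,n}^{III(\alpha)}(x)}{(-1)^m n\, L_{n-m}^{(\alpha)}(x)} \;=\; M_m(x)\;+\;\frac{M_{m-1}(x)\,\sqrt{-x}}{\sqrt{n}}\;+\;O\!\bigl(n^{-1}\bigr).
\]

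Since $M_m$ has a simple zero at $\zeta = \zeta_{k,\infty}^{III(\alpha)}$ (the zeros of $L_m^{(-\alpha-1)}(-x)$ are simple because the parameter $-\alpha-1>-1$ when $\alpha\in(-1,0)$), a Rouché / implicit-function argument produces a unique nearby zero $\zeta_{k,n}^{III(\alpha)}$ of $L_{m,n}^{III(\alpha)}$ satisfying
\[
\zeta_{k,n}^{III(\alpha)}-\zeta_{k,\infty}^{III(\alpha)} \;=\; -\frac{M_{m-1}(\zeta_{k,\infty}^{III(\alpha)})}{M_m'(\zeta_{k,\infty}^{III(\alpha)})}\cdot\frac{\sqrt{-\zeta_{k,\infty}^{III(\alpha)}}}{\sqrt{n}} \;+\; o\!\bigl(n^{-1/2}\bigr),
\]
and the determinantal identity $M_{m-1}=M_m'$ then collapses the prefactor to $1$, yielding the claimed limit. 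The main technical obstacle is not the Rouché step but justifying the termwise application of Theorem \ref{dhmthm} when some of the Laguerre superscripts $\alpha-m+k$ lie below $-1$ (which can happen for small $k$ since $\alpha\in(-1,0)$), and isolating precisely the $\sqrt{-x}/\sqrt{n}$ contribution from the multiplicative error $O(n^{-1/2})$ in Theorem \ref{dhmthm}. Both issues can be handled by preprocessing via the classical contiguous relation $L_n^{(\beta-1)}(x)=L_n^{(\beta)}(x)-L_{n-1}^{(\beta)}(x)$ to shift all offending parameters into $(-1,\infty)$, and by invoking a finer Plancherel-Rotach-type asymptotic for $L_n^{(\alpha)}$ on $\bbC\setminus[0,\infty)$ to pin down the leading correction explicitly.
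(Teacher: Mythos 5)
Your route is genuinely different from the paper's: you cofactor-expand the defining Wronskian along its last column, reduce the derivatives with \eqref{hahn}, and try to read off the $n^{-1/2}$ correction directly from ratio asymptotics plus Rouch\'e, whereas the paper evaluates the ODE \eqref{xlag3ode} at $\zeta_{k,n}^{III(\alpha)}$ to get an electrostatic identity $\sum_{j}2(\zeta_{k,n}^{III(\alpha)}-t_{j,n}^{III(\alpha)})^{-1}+(\alpha+1-\zeta_{k,n}^{III(\alpha)})/\zeta_{k,n}^{III(\alpha)}-\sum_j 2(\zeta_{k,n}^{III(\alpha)}-\zeta_{j,\infty}^{III(\alpha)})^{-1}=0$, then handles the sum over regular zeros by interlacing with $L_{n-m-1}^{(\alpha+1)}$ (via \cite[Theorem 5.5]{LLMS} and Lemma \ref{sjtj}) and Theorem \ref{dhmthm}. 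Your determinantal skeleton is correct: the cofactor signs, the identity $M_m'=M_{m-1}$ (differentiating a Wronskian only survives in the last row), and the identification of $M_m$ with a nonzero multiple of $L_m^{(-\alpha-1)}(-x)$ all check out, and the final algebra $-\bigl(M_{m-1}/M_m'\bigr)\sqrt{-\zeta_{k,\infty}^{III(\alpha)}}=-\sqrt{-\zeta_{k,\infty}^{III(\alpha)}}$ is consistent with the theorem.

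The gap is at the analytic heart of the argument, and it is larger than you indicate. Theorem \ref{dhmthm} requires both parameters to exceed $-1$; since $\alpha\in(-1,0)$, the parameter $\alpha-m+k$ exceeds $-1$ only for $k=m$. In particular the one term you actually need the coefficient of, namely $k=m-1$ with $L_{n-m+1}^{(\alpha-1)}$ and $\alpha-1\in(-2,-1)$, is outside the scope of the cited asymptotics --- this is not a nuisance affecting only ``small $k$.'' Your proposed repair via $L_n^{(\beta-1)}=L_n^{(\beta)}-L_{n-1}^{(\beta)}$ makes things worse, not better: it rewrites the critical term as a difference of two ratios whose leading asymptotics cancel exactly, leaving only the uncontrolled multiplicative $O(n^{-1/2})$ error of Theorem \ref{dhmthm}, so the coefficient $\sqrt{-x}$ cannot be recovered this way. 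Even for the admissible $k=m$ term, the multiplicative $O(n^{-1/2})$ error multiplies $M_m(x)$ and is a priori of the same order as the correction you are isolating; it is harmless only because $M_m(\zeta_{k,n}^{III(\alpha)})=O(n^{-1/2})$, which requires importing the a priori rate from \cite[Theorem 5]{BK18}. So the expansion $M_m(x)+M_{m-1}(x)\sqrt{-x}/\sqrt{n}+O(n^{-1})$ is precisely the statement that is not established; the ``finer Plancherel--Rotach-type asymptotic'' you defer to is the missing ingredient, not a technicality. The paper's proof is arranged so that Theorem \ref{dhmthm} is only ever applied to $L_{N-1}^{(\alpha+2)}/L_N^{(\alpha+1)}$ with both parameters positive, which is why it closes and yours, as written, does not.
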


\subsection{Exceptional Hermite Polynomials}\label{xhermpoly}

The exceptional Hermite polynomials were introduced in \cite{GGM14}, where they were used to find new examples of exactly solvable potentials for the Schr\"{o}dinger equation.  They have been studied in greater detail in \cite{D14,GKKM,KM,KM20}.  In \cite[Theorem 2.3]{KM}, it was shown that (under appropriate hypotheses) the exceptional zeros of exceptional Hermite polynomials tend to their limit points at a rate of $O(n^{-1/2})$ as $\nri$, where $n$ is the degree of the polynomial.  We will show that this result is sharp.

We begin our discussion with a review of some important properties of classical Hermite polynomials and their generalizations.  Then we will discuss the properties of the exceptional Hermite polynomials that we will need in our analysis.

\subsubsection{Classical and Generalized Hermite Polynomials}\label{hermclass}

The sequence of Hermite polynomials $\{H_n(x)\}_{n=0}^{\infty}$ is orthogonal on $\bbR$ with respect to the weight function $e^{-x^2}$.  The orthogonality only defines each polynomial up to a multiplicative constant, so we will choose this constant so that the following relations hold:
\begin{equation}\label{hermlag}
H_{2n}(x)=(-4)^nn!L_n^{(-1/2)}(x^2),\qquad H_{2n+1}(x)=2(-4)^nn!xL_n^{(1/2)}(x^2)
\end{equation}
From these formulas, one can deduce many properties of $H_n$ such as its asymptotic zero distribution and the fact that
\[
\frac{d}{dx}H_n(x)=2nH_{n-1}(x),\qquad n\geq1.
\]

Our analysis will require the use of generalized Hermite polynomials, which are defined using formulas involving the Wronskian determinant of Hermite polynomials.  Take some $r\in\bbN$ and let $\lambda=\{\lambda_j\}_{j=1}^r$ be a partition.  Associated to the partition $\lambda$ is the polynomial
\[
H_{\lambda}:=\mbox{Wr}[H_{\lambda_r},H_{\lambda_{r-1}+1},\ldots,H_{\lambda_1+r-1}],
\]
whose degree is $|\lambda|$ (see \cite[Section 2]{KM}).  If $\lambda$ is an even partition, then $H_{\lambda}$ has no real zeros (see \cite[Section 2]{KM}).

\subsubsection{Exceptional Hermite Polynomials}\label{hermx}

Using the notation above, suppose $\lambda$ is given as in the previous section and $m=|\lambda|$.  Define $\bbN_{\lambda}$ as in \cite{KM} by
\[
\bbN_{\lambda}=\{n:n\geq m-r,n\neq m-j+\lambda_j\,\mbox{for}\, j=1,2,\ldots,r\}.
\]
It is easy to see that the cardinality of $\bbN_0\setminus\bbN_{\lambda}$ is precisely $m$.  If $n\in\bbN_{\lambda}$, we may define
\[
H_{m,n}^{(\lambda)}(x)=\mbox{Wr}[H_{\lambda_r},H_{\lambda_{r-1}+1},\ldots,H_{\lambda_1+r-1},H_{n-m+r}]
\]
The polynomial $H_{m,n}^{(\lambda)}$ has degree $n$ (since $n\in\bbN_{\lambda}$) and we will call the sequence $\{H_{m,n}^{(\lambda)}\}_{n\in\bbN_{\lambda}}$ the sequence of $X_m$ exceptional Hermite polynomials with partition $\lambda$.

The polynomial $H_{m,n}^{(\lambda)}(x)$ satisfies the following ODE:
\begin{align}\label{xhermode}
y''-2\left(x+\frac{H_{\lambda}(x)'}{H_{\lambda}(x)}\right)y'+\left(\frac{H_{\lambda}(x)''}{H_{\lambda}(x)}+2x\frac{H_{\lambda}(x)'}{H_{\lambda}(x)}+2n-2m\right)y=0
\end{align}

We will be interested in the zeros of $H_{m,n}^{(\lambda)}$.  If $n\geq m+\lambda_1$, the polynomial $H_{m,n}^{(\lambda)}$ has $n-m$ real zeros (we call these zeros \textit{regular zeros}) and $m$ non-real zeros (we call these zeros \textit{exceptional zeros}) \cite[Section 2]{KM}.  To be more precise, when $\lambda$ is an even partition the regular zeros of $H_{m,n}^{(\lambda)}$ are situated such that at least $n-m-r$ open intervals between consecutive zeros of  $H_{n}$ contain a zero of $H_{m,n}^{(\lambda)}$ (see \cite[Corollary 4.3]{KM}).  This interlacing property tells us that the asymptotic distribution of the regular zeros of $H_{m,n}^{(\lambda)}$ for large $n$ is very similar to the distribution of zeros of classical Hermite polynomials.

The exceptional zeros $\{\zeta_{k,n}^{(\lambda)}\}_{k=1}^m$ of $H_{m,n}^{(\lambda)}$ also behave predictably.  It was shown in \cite[Theorem 2.3]{KM} that if the zeros $\{\zeta_{k,\infty}^{(\lambda)}\}_{k=1}^m$ of $H_{\lambda}$ are simple and $\lambda$ is an even partition, then one can index the set $\{\zeta_{k,n}^{(\lambda)}\}_{k=1}^m$ so that $|\zeta_{k,n}^{(\lambda)}-\zeta_{k,\infty}^{(\lambda)}|=O(n^{-1/2})$ as $\nri$, though it was not shown that this estimate is sharp (but see \cite[Lemma 6]{H17}).  Our main result is the following theorem that shows that the estimate from \cite{KM} is sharp and we calculate the precise leading order asymptotics.

\begin{theorem}\label{hermthm}
If $\lambda$ is an even partition and the zeros of $H_{\lambda}$ are all simple, it holds that
\[
\lim_{\nri}\sqrt{n}[\zeta_{k,n}^{(\lambda)}-\zeta_{k,\infty}^{(\lambda)}]=\frac{\zeta_{k,\infty}}{\sqrt{-2\left(\zeta_{k,\infty}^{(\lambda)}\right)^2}},
\]
where we take the branch of the square root that is positive on $(0,\infty)$ and with branch cut $(-\infty,0]$.
\end{theorem}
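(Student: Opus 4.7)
The plan is to derive a uniform asymptotic expansion for $H_{m,n}^{(\lambda)}$ in a complex neighborhood of $\zeta_{k,\infty}^{(\lambda)}$ from its Wronskian definition and then read off the location of the exceptional zero from a two-term balance. Expanding the $(r+1)\times(r+1)$ Wronskian along its last column and using the derivative identity $H_n^{(j)}(x)=\frac{2^{j}n!}{(n-j)!}H_{n-j}(x)$, one obtains
\[
H_{m,n}^{(\lambda)}(x)=\sum_{j=0}^{r}(-1)^{j+r}\,\frac{2^{j}(n-m+r)!}{(n-m+r-j)!}\,H_{n-m+r-j}(x)\,W_{j}(x),
\]
where $W_j(x)$ is the minor obtained by striking the row of $j$-th derivatives and the last column. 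Two identifications are crucial: $W_{r}(x)=H_{\lambda}(x)$ (the remaining rows are the Wronskian of $f_{1},\ldots,f_{r}$), and $W_{r-1}(x)=H_{\lambda}'(x)$, since row-by-row differentiation of a determinant kills every summand except the one in which the last row is differentiated.

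Next, combining the quadratic relations \eqref{hermlag} with Theorem \ref{dhmthm}, I would derive the Hermite ratio asymptotic
\[
\frac{H_{n-j}(x)}{H_{n}(x)}=\left(\frac{\sqrt{-x^{2}}}{x\sqrt{2n}}\right)^{j}(1+o(1)),\qquad\nri,
\]
uniformly on compact subsets of $\bbC\setminus\bbR$ for each fixed $j\geq 0$, with the branch specified in the theorem. Writing $\eta_n(x)=\sqrt{-2nx^{2}}/x$ and using $(n-m+r)!/(n-m+r-j)!=n^{j}(1+o(1))$, the Wronskian expansion collapses to
\[
\frac{H_{m,n}^{(\lambda)}(x)}{H_{n-m+r}(x)}=\eta_n(x)^{r}H_{\lambda}(x)-\eta_n(x)^{r-1}H_{\lambda}'(x)+O\bigl(\eta_n(x)^{r-2}\bigr),
\]
uniformly on a fixed neighborhood of $\zeta_{k,\infty}^{(\lambda)}$ in $\bbC\setminus\bbR$.

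Since $H_{n-m+r}$ has only real zeros while $\zeta_{k,n}^{(\lambda)}$ is non-real, the right-hand side must vanish at $x=\zeta_{k,n}^{(\lambda)}$. Taylor expanding $H_\lambda\bigl(\zeta_{k,n}^{(\lambda)}\bigr)=H_\lambda'\bigl(\zeta_{k,\infty}^{(\lambda)}\bigr)\bigl(\zeta_{k,n}^{(\lambda)}-\zeta_{k,\infty}^{(\lambda)}\bigr)+O\bigl((\zeta_{k,n}^{(\lambda)}-\zeta_{k,\infty}^{(\lambda)})^{2}\bigr)$, using simplicity to guarantee $H_\lambda'\bigl(\zeta_{k,\infty}^{(\lambda)}\bigr)\neq 0$, and invoking the a priori bound $\zeta_{k,n}^{(\lambda)}-\zeta_{k,\infty}^{(\lambda)}=O(n^{-1/2})$ from \cite[Theorem 2.3]{KM}, the balance of the two highest-order terms yields
\[
\zeta_{k,n}^{(\lambda)}-\zeta_{k,\infty}^{(\lambda)}=\frac{1}{\eta_n(\zeta_{k,\infty}^{(\lambda)})}(1+o(1))=\frac{\zeta_{k,\infty}^{(\lambda)}}{\sqrt{-2n\,\bigl(\zeta_{k,\infty}^{(\lambda)}\bigr)^{2}}}(1+o(1)),
\]
which multiplied by $\sqrt{n}$ gives the stated limit. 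The chief technical obstacle is confirming that the true root of $H_{m,n}^{(\lambda)}$ near $\zeta_{k,\infty}^{(\lambda)}$ is indeed the one predicted by the two-term balance rather than a spurious zero introduced by truncation. A Rouch\'e-type argument on a disk of radius slightly larger than $n^{-1/2}$ centered at the predicted location, exploiting the uniformity of the ratio asymptotic on that shrinking disk, handles this.
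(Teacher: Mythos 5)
Your proposal is correct in substance, but it reaches the result by a genuinely different route from the paper. The paper evaluates the second-order ODE \eqref{xhermode} at the exceptional zero $\zeta_{k,n}^{(\lambda)}$, divides by $H_{m,n}^{(\lambda)}{}'(\zeta_{k,n}^{(\lambda)})$ to obtain an electrostatic identity $\sum_{j}2/(\zeta_{k,n}^{(\lambda)}-t_{j,n}^{(\lambda)})-2\zeta_{k,n}^{(\lambda)}-\sum_j 2/(\zeta_{k,n}^{(\lambda)}-\zeta_{j,\infty}^{(\lambda)})=0$, and then evaluates $\tfrac{1}{\sqrt n}\sum_j(\zeta_{k,n}^{(\lambda)}-t_{j,n}^{(\lambda)})^{-1}$ by first replacing the regular zeros with the zeros of the classical $H_n$ (via the interlacing result \cite[Corollary 4.3]{KM} and the technical comparison Lemma \ref{sjtj}) and then applying Theorem \ref{dhmthm} through \eqref{hermlag}. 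You instead expand the defining Wronskian along its last column, identify the top two minors as $H_\lambda$ and $H_\lambda'$ (both identifications are correct, and they check out against the paper's explicit $\lambda=\{1,1\}$ computation, where $W_2=4(2x^2+1)=H_\lambda$ and $W_1=16x=H_\lambda'$), and balance the two leading terms. Your route shares with the paper the essential analytic input (the ratio asymptotics of Theorem \ref{dhmthm} transferred to Hermite polynomials via \eqref{hermlag}), but it bypasses the ODE, the electrostatic identity, Lemma \ref{hermpole}, the interlacing input from \cite{BD}, and Lemma \ref{sjtj} entirely; it also would yield higher-order corrections in principle. The cost is heavier bookkeeping with the error terms, and there is one point you should state more carefully: the displayed expansion $H_{m,n}^{(\lambda)}/H_{n-m+r}=\eta_n^rH_\lambda-\eta_n^{r-1}H_\lambda'+O(\eta_n^{r-2})$ is \emph{not} valid with an additive $O(\eta_n^{r-2})$ error on a fixed neighborhood, because the leading term carries a multiplicative error $(1+O(n^{-1/2}))$ whose additive contribution is $O(\eta_n^{r-1}|H_\lambda(x)|)$ --- the same order as the second term away from the zero of $H_\lambda$. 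The argument survives because you only evaluate at $x=\zeta_{k,n}^{(\lambda)}$, where $H_\lambda(\zeta_{k,n}^{(\lambda)})=o(1)$ makes that contribution $o(\eta_n^{r-1})$; you should write the errors multiplicatively term by term (or as $o\bigl(\eta_n^{r}|H_\lambda(x)|+\eta_n^{r-1}\bigr)$) to make this legitimate. Finally, the concluding Rouch\'e step is unnecessary: since you already invoke \cite[Theorem 2.3]{KM} for the existence of an exceptional zero $\zeta_{k,n}^{(\lambda)}\to\zeta_{k,\infty}^{(\lambda)}$, evaluating the exact cofactor identity at that actual zero suffices, and the bound $\zeta_{k,n}^{(\lambda)}-\zeta_{k,\infty}^{(\lambda)}=O(n^{-1/2})$ falls out of the balance itself rather than needing to be assumed.
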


The assumption that the zeros of $H_{\lambda}$ are simple is not a strong one and it was conjectured in \cite{FHV} that the non-zero zeros of $H_{\lambda}$ are always simple (even if $\lambda$ is not even).  Recent results stemming from an effort to prove this conjecture can be found in \cite{Grosux2}.

\section{Proof of Theorem \ref{jacthm}}\label{jac}

For this section, we will retain all of the notation from Section \ref{xjac}.  We begin with the following lemma.

\begin{lemma}\label{jacpole}
Under the assumptions of Theorem \ref{jacthm}, the polynomial $P_{\lambda,\mu,n}^{(\alpha,\beta)}$ does not vanish at any of the zeros of $\Omega_{\lambda,\mu}^{(\alpha,\beta)}$ when $n$ is large.
\end{lemma}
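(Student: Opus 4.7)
The plan is to expand the $(r+1)\times(r+1)$ Wronskian defining $P_{\lambda,\mu,n}^{(\alpha,\beta)}$ by cofactors along its last column, identify which cofactors vanish at a zero $\zeta$ of $\Omega_{\lambda,\mu}^{(\alpha,\beta)}$, and use the large-degree behavior of Jacobi polynomials outside $[-1,1]$ to extract a dominant nonzero term. Writing $W(x)=\mbox{Wr}[f_1,\ldots,f_r](x)$, so that $\Omega_{\lambda,\mu}^{(\alpha,\beta)}(x)=(1+x)^{(\beta+r_1)r_2}W(x)$, cofactor expansion in the last column produces
\[
\mbox{Wr}[f_1,\ldots,f_r,P_s^{(\alpha,\beta)}](x)=\sum_{k=0}^{r}c_k(x)\bigl(P_s^{(\alpha,\beta)}\bigr)^{(k)}(x),
\]
with $c_r(x)=W(x)$ and $c_{r-1}(x)=-W'(x)$. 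The identity $c_{r-1}=-W'$ is the standard fact that when one differentiates the $r\times r$ determinant $W$, all but one of the resulting matrices have two equal rows and vanish; the surviving one replaces the $(r-1)$-st-derivative row with the $r$-th-derivative row.

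The hypotheses of Proposition \ref{new61} force the zeros of $\Omega_{\lambda,\mu}^{(\alpha,\beta)}$ to lie in $\bbC\setminus[-1,1]$, so the simplicity of $\zeta$ is equivalent to $W(\zeta)=0$ and $W'(\zeta)\neq 0$, and the prefactor $(1+\zeta)^{(\beta+r_1+1)r_2}$ is nonzero. Setting $x=\zeta$ kills the $k=r$ summand and gives
\[
P_{\lambda,\mu,n}^{(\alpha,\beta)}(\zeta)=(1+\zeta)^{(\beta+r_1+1)r_2}\left[-W'(\zeta)\bigl(P_s^{(\alpha,\beta)}\bigr)^{(r-1)}(\zeta)+\sum_{k=0}^{r-2}c_k(\zeta)\bigl(P_s^{(\alpha,\beta)}\bigr)^{(k)}(\zeta)\right],
\]
with $s=n-|\lambda|-|\mu|+r_1$. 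It remains to show the $k=r-1$ term dominates as $\nri$.

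For this I would iterate \eqref{jacdiff} to write $\bigl(P_s^{(\alpha,\beta)}\bigr)^{(k)}=C_{s,k}P_{s-k}^{(\alpha+k,\beta+k)}$ with $C_{s,k}$ of order $s^k$, and invoke the classical large-$s$ asymptotic for Jacobi polynomials off the support: for $\zeta\in\bbC\setminus[-1,1]$ one has $P_{s-k}^{(\alpha+k,\beta+k)}(\zeta)\sim D_k(\zeta)s^{-1/2}\varphi(\zeta)^s$, where $\varphi(\zeta)=\zeta+\sqrt{\zeta^2-1}$ has modulus strictly greater than $1$ and $D_k(\zeta)\neq 0$. Thus $\bigl(P_s^{(\alpha,\beta)}\bigr)^{(k)}(\zeta)$ has order $s^{k-1/2}|\varphi(\zeta)|^s$, and the $k=r-1$ term beats each of the $k\leq r-2$ terms by an extra power of $s$. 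Combined with $W'(\zeta)\neq 0$ and the nonvanishing of the prefactor, this forces $P_{\lambda,\mu,n}^{(\alpha,\beta)}(\zeta)\neq 0$ for $n$ large. The only substantive ingredients are the algebraic identity $c_{r-1}=-W'$ and a uniform form of the Jacobi asymptotic, neither of which I anticipate as a serious obstacle.
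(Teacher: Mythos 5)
Your argument is correct, but it is genuinely different from the one in the paper. The paper's proof imports the calculation from the proof of \cite[Lemma 7.6]{Bonn}: the function $F_n$, which is a nonvanishing prefactor times $P_{\lambda,\mu,n}^{(\alpha,\beta)}(\cos 2x)/\Omega_{\lambda,\mu}^{(\alpha,\beta)}(\cos 2x)$, is an eigenfunction of a Schr\"odinger operator $-y''+V_{\lambda,\mu}y$ whose potential has a double pole at each (simple) zero of $\Omega_{\lambda,\mu}^{(\alpha,\beta)}(\cos 2x)$; if $P_{\lambda,\mu,n}^{(\alpha,\beta)}$ vanished at such a zero then $F_n$ and $F_n''$ would be regular there while $V_{\lambda,\mu}F_n$ would not, contradicting the eigenvalue equation. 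You instead work directly with the defining determinant: the cofactor identities $c_r=W$ and $c_{r-1}=-W'$ are correct (differentiating $W$ row by row kills all but the last row, so the surviving minor is exactly $M_r$), the simplicity of $\zeta\notin[-1,1]$ as a zero of $\Omega_{\lambda,\mu}^{(\alpha,\beta)}=(1+x)^{(\beta+r_1)r_2}W$ does transfer to $W(\zeta)=0$, $W'(\zeta)\neq0$ because the prefactor is nonvanishing there, and the Darboux/Szeg\H{o} outer asymptotic $P_{s-k}^{(\alpha+k,\beta+k)}(\zeta)\asymp s^{-1/2}\varphi(\zeta)^{s}$ with nonzero leading constant for $\zeta\notin[-1,1]$ indeed makes the $k=r-1$ term beat each $k\leq r-2$ term by a factor of $s$ (no uniformity is needed since there are only finitely many fixed points $\zeta$). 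The trade-offs: the paper's route is shorter given the machinery already borrowed from Bonneux and is essentially algebraic, while yours is more self-contained, inherently asymptotic (which is why it only yields the conclusion for $n$ large, matching the statement), and as a bonus gives the actual growth rate of $P_{\lambda,\mu,n}^{(\alpha,\beta)}(\zeta)$. The only details you should make explicit are the local branch choice of $(1+x)^{-\beta}$ in the functions $f_{r_1+j}$ near $\zeta\neq-1$ (harmless, since the assembled objects are polynomials) and the iterated form of \eqref{jacdiff} giving $C_{s,k}\asymp s^k$; neither is a gap.
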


\begin{proof}
We will rely on calculations from the proof of \cite[Lemma 7.6]{Bonn}.  In particular, it is shown there that the function
\[
F_n(x):=\sin(x)^{\alpha+r+1/2}\cos(x)^{\beta+r+1/2}\cdot\frac{P_{\lambda,\mu,n}^{(\alpha,\beta)}(\cos(2x))}{\Omega_{\lambda,\mu}^{(\alpha,\beta)}(\cos(2x))}
\]
is an eigenfunction of the differential operator $S_{\lambda,\mu}[y]=-y''+V_{\lambda,\mu}y$, where
\begin{align*}
&V_{\lambda,\mu}(x)=\frac{(\alpha+2r-1/2)(\alpha+1/2)}{\sin^2(x)}+\frac{(\beta+2r-1/2)(\beta+1/2)}{\cos^2(x)}+\frac{r^2-r}{\sin^2(x)\cos^2(x)}\\
&\hspace{4in}-2\frac{d^2}{dx^2}\log\left(\Omega_{\lambda,\mu}^{(\alpha,\beta)}(\cos(2x))\right)
\end{align*}
From our assumptions about the zeros of $\Omega_{\lambda,\mu}^{(\alpha)}(x)$, it follows that $V_{\lambda,\mu}(x)$ has a pole of order $2$ at any zero of $\Omega_{\lambda,\mu}^{(\alpha,\beta)}(\cos(2x))$ when $n$ is large.  However, if $F_n(x)$ does not have a pole at a particular zero of $\Omega_{\lambda,\mu}^{(\alpha,\beta)}(\cos(2x))$, then neither does $F_n''(x)$ and hence $F_n(x)$ cannot be an eigenfunction of $S_{\lambda,\mu}$.  This contradiction proves the result.
\end{proof}

We also need the following lemma, which is a consequence of \cite[Theorem 6.5]{Bonn}.

\begin{lemma}\label{xjacdist}
Under the assumptions of Theorem \ref{jacthm}, for each $k=1,\ldots,|\lambda|+|\mu|$ it holds that
\[
\lim_{\nri}\frac{1}{n}\sum_{j=1}^{n-|\lambda|-|\mu|}\frac{1}{\zeta_{k,\infty}^{(\alpha,\beta)}-z_{j,n}^{(\alpha,\beta)}}=\sqrt{\frac{1}{\left(\zeta_{k,\infty}^{(\alpha,\beta)}\right)^2-1}},
\]
where we take the branch of the square root that is positive on $[1,\infty)$ and with branch cut $[-1,1]$.
\end{lemma}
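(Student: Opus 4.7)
The plan is to recognize the sum as (essentially) the Cauchy transform of the normalized zero counting measure of the regular zeros, evaluated at the limit point $\zeta_{k,\infty}^{(\alpha,\beta)}$, and then to invoke the asymptotic zero distribution result cited in the statement to compute the limit.

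First I would introduce the normalized counting measure
\[
\nu_n=\frac{1}{n-|\lambda|-|\mu|}\sum_{j=1}^{n-|\lambda|-|\mu|}\delta_{z_{j,n}^{(\alpha,\beta)}}
\]
and rewrite the sum of interest as
\[
\frac{1}{n}\sum_{j=1}^{n-|\lambda|-|\mu|}\frac{1}{\zeta_{k,\infty}^{(\alpha,\beta)}-z_{j,n}^{(\alpha,\beta)}}
=\frac{n-|\lambda|-|\mu|}{n}\int\frac{d\nu_n(x)}{\zeta_{k,\infty}^{(\alpha,\beta)}-x}.
\]
The prefactor tends to $1$, so the task is to identify the weak limit of $\nu_n$ and integrate against it. Here I would appeal to \cite[Theorem 6.5]{Bonn}, which gives weak-star convergence of $\nu_n$ to the arcsine measure $d\nu(x)=\frac{1}{\pi\sqrt{1-x^2}}dx$ on $[-1,1]$, the same limiting distribution as for the classical Jacobi zeros.

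To pass to the limit inside the integral, I need $x\mapsto(\zeta_{k,\infty}^{(\alpha,\beta)}-x)^{-1}$ to be continuous on the support $[-1,1]$. Since $\lambda$ is even and $(\alpha,\beta)$ satisfies the standing hypotheses, the recalled result in Section \ref{classjac} guarantees that $\Omega_{\lambda,\mu}^{(\alpha,\beta)}$ has no zeros in $[-1,1]$, so in particular $\zeta_{k,\infty}^{(\alpha,\beta)}\notin[-1,1]$. The integrand is therefore bounded and continuous on $[-1,1]$, and weak convergence applies.

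The proof then concludes with the standard evaluation of the Cauchy transform of the arcsine measure: for any $\zeta\in\bbC\setminus[-1,1]$, the substitution $x=\cos\theta$ reduces the integral to $\frac{1}{\pi}\int_0^\pi\frac{d\theta}{\zeta-\cos\theta}$, which equals $1/\sqrt{\zeta^2-1}$ under the branch that is positive on $[1,\infty)$. Applying this at $\zeta=\zeta_{k,\infty}^{(\alpha,\beta)}$ yields the desired value. I expect the only subtle step to be the justification that the cited asymptotic zero distribution theorem applies in precisely this generality and with the correct normalization; once this is checked, the remainder is essentially a continuous mapping argument and a classical integral computation.
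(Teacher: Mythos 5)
Your proposal is correct and follows exactly the route the paper intends: the paper offers no written proof of this lemma beyond the remark that it is a consequence of \cite[Theorem 6.5]{Bonn}, and your argument is precisely the standard way of extracting the statement from that theorem (weak-star convergence of the regular-zero counting measures, supported in the fixed compact set $[-1,1]$, against the integrand $x\mapsto(\zeta_{k,\infty}^{(\alpha,\beta)}-x)^{-1}$, which is continuous there because $\zeta_{k,\infty}^{(\alpha,\beta)}$ is a zero of $\Omega_{\lambda,\mu}^{(\alpha,\beta)}$ and hence lies off $[-1,1]$, followed by the classical evaluation of the Cauchy transform of the arcsine measure). The details you supply, including the harmless normalization factor $(n-|\lambda|-|\mu|)/n\to1$ and the branch of the square root, are all correct.
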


%\begin{proof}
%By \cite[Theorem 6.5]{Bonn}, we know that
%\[
%\lim_{\nri}\frac{1}{n}\sum_{j=1}^{n-|\lambda|-|\mu|}\frac{1}{\zeta_{k,\infty}^{(\alpha,\beta)}-z_{j,n}^{(\alpha,\beta)}}=\sqrt{\frac{1}{\left(\zeta_{k,\infty}^{(\alpha,\beta)}\right)^2-1}},
%\]
%\end{proof}

\begin{proof}[Proof of Theorem \ref{jacthm}]
By Lemma \ref{jacpole}, we know that $\zeta_{k,n}^{(\alpha,\beta)}\neq\zeta_{k,\infty}^{(\alpha,\beta)}$ when $n$ is large.  Therefore, we may use \cite[Equation 7.20]{Bonn}, which tells us
\begin{align*}
&\sum_{j=1}^{n-|\lambda|-|\mu|}\frac{1}{\zeta_{k,\infty}^{(\alpha,\beta)}-z_{j,n}^{(\alpha,\beta)}}+\sum_{j=1}^{|\lambda|+|\mu|}\frac{1}{\zeta_{k,\infty}^{(\alpha,\beta)}-\zeta_{j,n}^{(\alpha,\beta)}}\\
&\hspace{1in}=\frac{\alpha+r}{2(1-\zeta_{k,\infty}^{(\alpha,\beta)})}-\frac{\beta+r}{2(1+\zeta_{k,\infty}^{(\alpha,\beta)})}+\frac{3\zeta_{k,\infty}^{(\alpha,\beta)}}{1-(\zeta_{k,\infty}^{(\alpha,\beta)})^2}+\sum_{{j=1}\atop{j\neq k}}^{|\lambda+|\mu|}\frac{1}{\zeta_{k,\infty}^{(\alpha,\beta)}-\zeta_{j,\infty}^{(\alpha,\beta)}}
\end{align*}
Now divide by $n$ and send $\nri$.  Every term on the right-hand side tends to $0$ because of the simplicity assumption on the zeros of $\Omega_{\lambda,\mu}^{(\alpha,\beta)}$ and \cite[Lemma 2.8]{Bonn}.  In the first sum on the left-hand side, we may apply Lemma \ref{xjacdist} to evaluate the limit.  In the second sum on the left-hand side, every term tends to $0$ except possibly the one for which $j=k$.  Thus
\[
\left(\lim_{\nri}n[\zeta_{k,n}^{(\alpha,\beta)}-\zeta_{k,\infty}^{(\alpha,\beta)}]\right)^{-1}=\lim_{\nri}\frac{1}{n}\sum_{j=1}^{n-|\lambda|-|\mu|}\frac{1}{\zeta_{k,\infty}^{(\alpha,\beta)}-z_{j,n}^{(\alpha,\beta)}}=\sqrt{\frac{1}{\left(\zeta_{k,\infty}^{(\alpha,\beta)}\right)^2-1}}
\]
by Lemma \ref{xjacdist}
\end{proof}

\noindent\textit{Remark.}  Our proof of Theorem \ref{jacthm} used the fact that each point $\{\zeta_{k,\infty}^{(\alpha,\beta)}\}_{k=1}^{|\lambda|+|\mu|}$ attracts exactly one exceptional zero of $P_{\lambda,\mu,n}^{(\alpha,\beta)}$.  This follows from the fact that the number of zeros of $\Omega_{\lambda,\mu}^{(\alpha,\beta)}$ (which we assume to be distinct) matches the number of exceptional zeros of $P_{\lambda,\mu,n}^{(\alpha,\beta)}$ for large $n$.  Our proof yields the same conclusion for any $\zeta_{k,\infty}^{(\alpha,\beta)}$ that is a simple zero of $\Omega_{\lambda,\mu}^{(\alpha,\beta)}$ outside $[-1,1]$ and which has a single nearby zero of $P_{\lambda,\mu,n}^{(\alpha,\beta)}$ for large $n$.

\section{Proof of Theorems \ref{lagthm} and \ref{lag3thm}}\label{lag}

\subsection{Proof in the case $\lambda$ is an even partition}\label{lameven}

For this section, we will retain all of the notation from Section \ref{xlag}.  Additionally, let $\{z_{j,n}^{(\alpha)}\}_{j=1}^{n-|\lambda|-|\mu|}$ denote the regular zeros of $L_{\lambda,\mu,n}^{(\alpha)}$.  We begin with the following lemma.

\begin{lemma}\label{xlagnozero}
Under the hypotheses of Theorem \ref{lagthm}, the polynomials $L_{\lambda,\mu,n}^{(\alpha)}$ and $\Omega_{\lambda,\mu}^{(\alpha)}$ have no zeros in common.
\end{lemma}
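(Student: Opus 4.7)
The approach parallels the proof of Lemma~\ref{jacpole}, working directly with the ODE satisfied by $L_{\lambda,\mu,n}^{(\alpha)}$. By the Darboux-transformation construction of \cite{BK18} (and as is explicit in the Type-III ODE \eqref{xlag3ode}), $L_{\lambda,\mu,n}^{(\alpha)}$ satisfies a second-order linear equation $y''+P(x)y'+Q(x)y=0$ in which $P$ contains the term $-2(\Omega_{\lambda,\mu}^{(\alpha)})'(x)/\Omega_{\lambda,\mu}^{(\alpha)}(x)$ and $Q$ is regular at every nonzero point of $\bbC$. Since $\alpha>-1$ and $\lambda$ is even, the results recalled in Section~\ref{xlagintro} place the zeros of $\Omega_{\lambda,\mu}^{(\alpha)}$ in $\bbC\setminus[0,\infty)$; the simplicity hypothesis then makes each such zero $x_0$ a regular singular point of the ODE at which $P$ has a simple pole of residue $-2$ and $Q$ has no pole.

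Consequently the indicial equation at $x_0$ is $s(s-1)-2s=0$, so the Frobenius exponents are $s=0$ and $s=3$. Since $L_{\lambda,\mu,n}^{(\alpha)}$ is a polynomial it cannot contain any logarithmic term, and hence near $x_0$ it is a linear combination of an analytic solution with leading term $1$ and an analytic solution with leading term $(x-x_0)^3$. Thus $L_{\lambda,\mu,n}^{(\alpha)}(x_0)=0$ would force $L_{\lambda,\mu,n}^{(\alpha)}$ to vanish at $x_0$ to order at least $3$.

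To rule this out, I would invoke \cite[Theorem 5]{BK18}: under the hypotheses of Theorem~\ref{lagthm}, for $n$ sufficiently large each zero of $\Omega_{\lambda,\mu}^{(\alpha)}$ attracts exactly one exceptional zero of $L_{\lambda,\mu,n}^{(\alpha)}$. A triple zero of $L_{\lambda,\mu,n}^{(\alpha)}$ at $x_0$ would require three exceptional zeros to coalesce there, contradicting this one-to-one attraction once $n$ is large. This completes the argument.

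The main obstacle is justifying the two local claims about the ODE coefficients: residue $-2$ in $P$ and vanishing of the $(x-x_0)^{-2}$ coefficient of $Q$. The first is immediate from the logarithmic derivative of $\Omega_{\lambda,\mu}^{(\alpha)}$. The second is more subtle: it reflects the fact that the $-2(\log\Omega_{\lambda,\mu}^{(\alpha)})''$ singularity appearing in the Schr\"odinger potential of the Darboux-transformed Laguerre operator is exactly compensated, when one passes from the Schr\"odinger eigenfunction back to the polynomial $L_{\lambda,\mu,n}^{(\alpha)}$ itself, by the $((\log\Omega_{\lambda,\mu}^{(\alpha)})')^2$ term generated by the $1/\Omega_{\lambda,\mu}^{(\alpha)}$ gauge factor. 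Once this cancellation is recorded (or checked on a case-by-case basis from the explicit formulas in \cite{BK18,LLMS,GMM}), the Frobenius and zero-counting steps complete the proof.
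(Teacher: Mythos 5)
Your argument reaches the correct conclusion by a genuinely different route from the paper. The paper works with the Schr\"odinger (gauge-transformed) form of the operator from \cite[Lemma 11]{BK18}: the function $G_n(x)=x^{\alpha+r+1/2}e^{-x^2/2}L_{\lambda,\mu,n}^{(\alpha)}(x^2)/\Omega_{\lambda,\mu}^{(\alpha)}(x^2)$ is an eigenfunction of $-\frac{d^2}{dx^2}+V_{\lambda,\mu}$, where $V_{\lambda,\mu}$ has a double pole at each zero of $\Omega_{\lambda,\mu}^{(\alpha)}(x^2)$, and it concludes that $G_n$ must itself have a pole there, so $L_{\lambda,\mu,n}^{(\alpha)}$ cannot vanish at a zero of $\Omega_{\lambda,\mu}^{(\alpha)}$ --- an argument valid for every $n$ with no appeal to asymptotics. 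You instead run a Frobenius analysis on the polynomial ODE itself and finish by zero-counting. Your indicial computation is sound: the Type-III equation \eqref{xlag3ode} displays exactly the residue $-2$ in $P$ and a $Q$ with no pole at zeros of $\Omega_{\lambda,\mu}^{(\alpha)}$, and in the general case the $\bigl((\log\Omega_{\lambda,\mu}^{(\alpha)})'\bigr)^2$ double poles cancel as you say, leaving $Q$ with at most \emph{simple} poles at the simple zeros of $\Omega_{\lambda,\mu}^{(\alpha)}$ (so ``$Q$ is regular'' is a slight overstatement, but only the vanishing of $q_0$ in the indicial polynomial is needed, and that holds). The exponents $0$ and $3$ then force any common zero to have multiplicity at least $3$, and the one-to-one attraction of exceptional zeros to the $|\lambda|+|\mu|$ distinct zeros of $\Omega_{\lambda,\mu}^{(\alpha)}$ excludes this. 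Two trade-offs are worth recording. First, your proof only yields the lemma for $n$ large, since the attraction step is asymptotic; the paper's version is $n$-independent, though the lemma is only ever invoked for large $n$, so nothing is lost in the application. Second, your extra care buys something real: the paper's one-line deduction (``if $G_n$ has no pole then neither does $G_n''$, contradiction'') silently ignores the possibility that $G_n$ vanishes to order at least $2$ at the singular point, in which case $V_{\lambda,\mu}G_n$ is regular and no contradiction arises; your exponent-$3$ dichotomy together with the attraction argument is precisely what closes that loophole.
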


\begin{proof}
We will rely on calculations from the proof of \cite[Lemma 11]{BK18}.  In particular, it is shown there that the function
\[
G_n(x):=x^{\alpha+r+1/2}e^{-x^2/2}\frac{L_{\lambda,\mu,n}^{(\alpha)}(x^2)}{\Omega_{\lambda,\mu}^{(\alpha)}(x^2)}
\]
is an eigenfunction of the differential operator $T_{\lambda,\mu}[y]=-y''+V_{\lambda,\mu}y$, where
\[
V_{\lambda,\mu}(x)=x^2+2r+\frac{4(\alpha+r)^2-1}{4x^2}-2\frac{d^2}{dx^2}\log\left(\Omega_{\lambda,\mu}^{(\alpha)}(x^2)\right)
\]
From our assumptions about the zeros of $\Omega_{\lambda,\mu}^{(\alpha)}(x)$, it follows that $V_{\lambda,\mu}(x)$ has a pole of order $2$ at any zero of $\Omega_{\lambda,\mu}^{(\alpha)}(x^2)$.  However, if $G_n(x)$ does not have a pole at a particular zero of $\Omega_{\lambda,\mu}^{(\alpha)}(x^2)$, then neither does $G_n''(x)$ and hence $G_n(x)$ cannot be an eigenfunction of $T_{\lambda,\mu}$.  This contradiction proves the result.
\end{proof}

We will also need the following lemma.

\begin{lemma}\label{xlagdist}
For each $k=1,\ldots,|\lambda+|\mu|$ it holds that
\[
\lim_{\nri}\frac{1}{\sqrt{n}}\sum_{j=1}^{n-|\lambda|-|\mu|}\frac{1}{\zeta_{k,\infty}^{(\alpha)}-z_{j,n}^{(\alpha)}}=\frac{-1}{\sqrt{-\zeta_{k,\infty}^{(\alpha)}}},
\]
where we take the branch of the square root that is positive on $(0,\infty)$ and with branch cut $(-\infty,0]$.
\end{lemma}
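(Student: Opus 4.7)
The plan is to realize $\sum_j 1/(\zeta_{k,\infty}^{(\alpha)}-z_{j,n}^{(\alpha)})$ as the logarithmic derivative $Q_n'(\zeta_{k,\infty}^{(\alpha)})/Q_n(\zeta_{k,\infty}^{(\alpha)})$ of the monic polynomial $Q_n(z):=\prod_j(z-z_{j,n}^{(\alpha)})$ collecting the regular zeros of $L_{\lambda,\mu,n}^{(\alpha)}$, and then to compute that logarithmic derivative via a ratio asymptotic for $L_{\lambda,\mu,n}^{(\alpha)}$. Factor $L_{\lambda,\mu,n}^{(\alpha)}(z)=c_nQ_n(z)R_n(z)$, where $R_n(z):=\prod_k(z-\zeta_{k,n}^{(\alpha)})$ collects the exceptional zeros and $c_n$ is the leading coefficient. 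Since $\zeta_{k,n}^{(\alpha)}\to\zeta_{k,\infty}^{(\alpha)}$ and the limits are simple zeros of $\Omega_{\lambda,\mu}^{(\alpha)}$, $R_n\to R_\infty:=\prod_k(z-\zeta_{k,\infty}^{(\alpha)})$ uniformly on compacts, and $\Omega_{\lambda,\mu}^{(\alpha)}/R_\infty$ equals the nonzero leading coefficient $c_\Omega$ of $\Omega_{\lambda,\mu}^{(\alpha)}$.

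First I would derive a ratio asymptotic for $L_{\lambda,\mu,n}^{(\alpha)}$ from its Wronskian definition. Expanding the Wronskian along the column containing $L_s^{(\alpha)}$ and its derivatives (with $s=n-|\lambda|-|\mu|+r_1$) and converting each derivative to a Laguerre polynomial via the Hahn relation \eqref{hahn} yields
\[
L_{\lambda,\mu,n}^{(\alpha)}(z)=(-1)^r e^{-r_2z}\sum_{i=0}^{r}L_{s-i}^{(\alpha+i)}(z)\,M_i(z),
\]
where each polynomial $M_i(z)$ is independent of $n$ and $M_r(z)=e^{r_2z}\Omega_{\lambda,\mu}^{(\alpha)}(z)$. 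Theorem \ref{dhmthm} gives $L_{s-i}^{(\alpha+i)}(z)/L_s^{(\alpha)}(z)=(s/(-z))^{i/2}(1+o(1))$ uniformly on compact subsets of $\bbC\setminus[0,\infty)$; the $i=r$ summand dominates as $s\to\infty$, producing
\[
\frac{L_{\lambda,\mu,n}^{(\alpha)}(z)}{L_s^{(\alpha)}(z)(s/(-z))^{r/2}}\longrightarrow(-1)^r\Omega_{\lambda,\mu}^{(\alpha)}(z)
\]
uniformly on compact subsets of $\bbC\setminus[0,\infty)$.

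The delicate step is to divide by $R_n$ and extend uniform convergence into a full closed disk $\overline{D}\subset\bbC\setminus[0,\infty)$ centered at $\zeta:=\zeta_{k,\infty}^{(\alpha)}$. The function
\[
G_n(z):=\frac{c_n\,Q_n(z)}{L_s^{(\alpha)}(z)(s/(-z))^{r/2}}=\frac{L_{\lambda,\mu,n}^{(\alpha)}(z)/R_n(z)}{L_s^{(\alpha)}(z)(s/(-z))^{r/2}}
\]
is holomorphic on $\overline{D}$: the regular zeros of $L_{\lambda,\mu,n}^{(\alpha)}$ lie in $(0,\infty)$ and stay out of $\overline{D}$, while Hurwitz's theorem identifies the unique exceptional zero of $L_{\lambda,\mu,n}^{(\alpha)}$ near $\zeta$ with $\zeta_{k,n}^{(\alpha)}$, the unique zero of $R_n$ inside $\overline{D}$, so $L_{\lambda,\mu,n}^{(\alpha)}/R_n=c_nQ_n$ is a polynomial. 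On the boundary $\partial\overline{D}$, $R_n$ does not vanish for large $n$ and $R_n\to R_\infty$, so $G_n$ converges uniformly on $\partial\overline{D}$ to the nonzero constant $(-1)^r c_\Omega$. Applying the maximum modulus principle to the holomorphic functions $G_n-(-1)^rc_\Omega$ promotes uniform convergence to all of $\overline{D}$.

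Once uniform convergence of $G_n$ to a nonzero constant on $\overline{D}$ is in hand, the Weierstrass convergence theorem yields $G_n'(\zeta)/G_n(\zeta)\to 0$. Expanding this logarithmic derivative and using $\frac{d}{dz}\log(s/(-z))^{r/2}=-r/(2z)$ gives
\[
\frac{Q_n'(\zeta)}{Q_n(\zeta)}=\frac{(L_s^{(\alpha)})'(\zeta)}{L_s^{(\alpha)}(\zeta)}-\frac{r}{2\zeta}+o(1),
\]
and a final use of \eqref{hahn} with Theorem \ref{dhmthm} gives $(L_s^{(\alpha)})'(\zeta)/L_s^{(\alpha)}(\zeta)=-L_{s-1}^{(\alpha+1)}(\zeta)/L_s^{(\alpha)}(\zeta)=-\sqrt{s/(-\zeta)}(1+o(1))$. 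Dividing by $\sqrt{n}$ and using $s/n\to 1$ produces the claimed limit $-1/\sqrt{-\zeta_{k,\infty}^{(\alpha)}}$. The main obstacle is the third paragraph: obtaining uniform convergence of $G_n$ across $\zeta$ in spite of the apparent $0/0$ behavior of $L_{\lambda,\mu,n}^{(\alpha)}/R_n$ near the exceptional zero; everything else is routine manipulation of the Wronskian expansion together with Theorem \ref{dhmthm}.
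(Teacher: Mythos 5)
Your proposal is correct, but it reaches the conclusion by a genuinely different route than the paper. The paper's proof is a short reduction: since the regular zeros of $L_{\lambda,\mu,n}^{(\alpha)}$ interlace the zeros $\sigma_{j,n}^{(\alpha+r)}$ of the classical polynomial $L_n^{(\alpha+r)}$ (this is \cite[Corollary 2]{BK18}, available because $\lambda$ even forces $\Omega_{\lambda,\mu}^{(\alpha)}$ to be zero-free on $[0,\infty)$), the elementary comparison Lemma \ref{sjtj} gives
\[
\sum_{j=1}^{n-|\lambda|-|\mu|}\frac{1}{\zeta_{k,\infty}^{(\alpha)}-z_{j,n}^{(\alpha)}}=\frac{\bigl(L_n^{(\alpha+r)}\bigr)'(\zeta_{k,\infty}^{(\alpha)})}{L_n^{(\alpha+r)}(\zeta_{k,\infty}^{(\alpha)})}+O(1),
\]
after which the Hahn identity \eqref{hahn} and Theorem \ref{dhmthm} finish the computation exactly as in your last paragraph. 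You instead derive a ratio asymptotic for $L_{\lambda,\mu,n}^{(\alpha)}$ itself from the cofactor expansion of the Wronskian, strip off the exceptional-zero factor $R_n$, and upgrade uniform convergence on a small circle around $\zeta_{k,\infty}^{(\alpha)}$ to convergence of the logarithmic derivative at the center via the maximum modulus principle and Cauchy estimates. Both arguments are sound, and your identification of the $0/0$ issue near $\zeta_{k,\infty}^{(\alpha)}$ as the delicate point is exactly right. The trade-off: the paper's route is shorter because the interlacing corollary is already in the literature, and its proof of this particular lemma never touches the exceptional zeros at all; your route avoids the interlacing input entirely (using only the Wronskian definition, \eqref{hahn}, and Theorem \ref{dhmthm}) but pays with the normal-families step and with the additional background fact that exactly one exceptional zero converges to each simple zero of $\Omega_{\lambda,\mu}^{(\alpha)}$, which you need so that $\Omega_{\lambda,\mu}^{(\alpha)}/R_\infty$ is a nonzero constant and $G_n$ has a zero-free holomorphic limit on the closed disk. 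The bookkeeping difference is harmless: you land on $L_s^{(\alpha)}$ with $s=n-|\lambda|-|\mu|+r_1$ while the paper lands on $L_n^{(\alpha+r)}$, and Theorem \ref{dhmthm} makes the $\sqrt{n}$-normalized logarithmic derivative insensitive to such bounded shifts of degree and parameter.
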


The proof of this lemma requires the following result.

\begin{lemma}\label{sjtj}
Suppose that for each $n\in\bbN$ sufficiently large, $\{\rho_{j,n}\}_{j=1}^n$ is a collection of $n$ distinct real numbers with $\rho_{1,n}<\rho_{2,n}<\cdots<\rho_{n,n}$.  Suppose $\{s_{j,n}\}_{j=1}^{n-1}$ satisfies
\[
\rho_{j,n}<s_{j,n}<\rho_{j+1,n},\qquad\qquad j=1,2,\ldots,n-1
\]
Define the open set $\Omega$ by
\[
\Omega=\{z:\Imag[z]\neq0,\}\cup\{z:z<\inf_n\rho_{1,n}\}\cup\{z:z>\sup_n\rho_{n,n}\}
\]
If $x\in\Omega$, then
\begin{equation}\label{tsum}
\left|\sum_{j=1}^n\frac{1}{x-\rho_{j,n}}-\sum_{j=1}^{n-1}\frac{1}{x-s_{j,n}}\right|=O(1)
\end{equation}
as $\nri$ and the error term is uniform on compact subsets of $\Omega$.
\end{lemma}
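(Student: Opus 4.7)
The plan is to interpret the two sums on the left-hand side of \eqref{tsum} as Cauchy transforms of counting measures and to convert their difference, via integration by parts, into an integral against a bounded, compactly supported function controlled by the interlacing hypothesis. Setting $\mu_n=\sum_{j=1}^n\delta_{\rho_{j,n}}$ and $\nu_n=\sum_{j=1}^{n-1}\delta_{s_{j,n}}$, the left-hand side of \eqref{tsum} equals $\int(x-t)^{-1}\,d(\mu_n-\nu_n)(t)$. The signed measure $\mu_n-\nu_n$ has total mass $1$, so its distribution function fails to be compactly supported; to remedy this I would peel off the term $(x-\rho_{n,n})^{-1}$ and work instead with the zero-mass signed measure $\sigma_n:=\mu_n-\nu_n-\delta_{\rho_{n,n}}$.

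The interlacing hypothesis $\rho_{j,n}<s_{j,n}<\rho_{j+1,n}$ gives the key structural property: the distribution function
\[
\tilde F_n(t):=\sigma_n((-\infty,t])
\]
equals $1$ on each interval $[\rho_{j,n},s_{j,n})$ for $j\le n-1$, equals $0$ on each $[s_{j,n},\rho_{j+1,n})$, and vanishes outside $[\rho_{1,n},s_{n-1,n}]$. Thus $0\le\tilde F_n\le 1$ uniformly in $n$ and $\tilde F_n$ is compactly supported. Integration by parts, with vanishing boundary terms, then yields
\[
\sum_{j=1}^n\frac{1}{x-\rho_{j,n}}-\sum_{j=1}^{n-1}\frac{1}{x-s_{j,n}}=\frac{1}{x-\rho_{n,n}}-\int_{-\infty}^{\infty}\frac{\tilde F_n(t)}{(x-t)^2}\,dt.
\]

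To conclude, I would bound the right-hand side uniformly for $x$ in a compact $K\subset\Omega$. When $x=a+ib$ with $b\neq 0$, the Poisson-kernel identity gives
\[
\left|\int_{-\infty}^{\infty}\frac{\tilde F_n(t)}{(x-t)^2}\,dt\right|\le\int_{-\infty}^{\infty}\frac{dt}{(a-t)^2+b^2}=\frac{\pi}{|b|},
\]
which is uniformly bounded on $K\cap\{\Imag\neq 0\}$ since $|\Imag(x)|$ is bounded below there by compactness. When $x$ is real and outside $[\inf_n\rho_{1,n},\sup_n\rho_{n,n}]$, the integrand is nonnegative and majorized by $(x-t)^{-2}$ on $[\rho_{1,n},s_{n-1,n}]$, yielding a bound of the form $|x-s_{n-1,n}|^{-1}+|x-\rho_{1,n}|^{-1}$ that is uniform on $K$ thanks to the positive distance from $K$ to $[\inf_n\rho_{1,n},\sup_n\rho_{n,n}]$. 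The leftover term $(x-\rho_{n,n})^{-1}$ is controlled by the same distance.

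The main obstacle is the fact that $\sup_n\rho_{n,n}$ may be $+\infty$ (as happens in the Laguerre application, where the $\rho_{j,n}$ are zeros of $L_n^{(\alpha+r)}$), so a naive pairing estimate $|s_{j,n}-\rho_{j,n}|\le\rho_{j+1,n}-\rho_{j,n}$ summed over $j$ is not uniformly controlled in $n$. The integration-by-parts maneuver circumvents this: the growing sum of gaps is replaced by the integral of a function uniformly bounded by $1$, and the Poisson kernel provides an $n$-independent bound on its Cauchy transform whenever $\Imag(x)\neq 0$.
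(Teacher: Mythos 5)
Your argument is correct in substance but follows a genuinely different route from the paper. The paper's proof works at the level of the discrete sums: after dropping one term it pairs adjacent points to write the difference as $\sum_j \frac{\rho_{j+1,n}-s_{j,n}}{(x-\rho_{j+1,n})(x-s_{j,n})}$, then splits the indices according to whether $s_{j,n}$ lies in a fixed window $[-T,T]$ (where the sum of gaps telescopes to at most $2T$) or in the tails (where a bound of the form $C_K'/(\rho_{j+1,n}s_{j,n})$ telescopes to at most $C_K'/T$). You perform the same cancellation one level up, via the distribution function $\tilde F_n$ of the interlacing signed measure: the interlacing hypothesis is used exactly once, to guarantee $0\le\tilde F_n\le 1$ with support in the forbidden interval, and a Poisson-kernel type bound then gives an $n$-independent estimate in one stroke. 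Your version is arguably cleaner and makes transparent why the possible unboundedness of $\sup_n\rho_{n,n}$ is harmless; the paper's tail estimate accomplishes the same thing more by hand.

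One step needs repair. The claim that $|\Imag(x)|$ is bounded below on $K\cap\{\Imag\neq0\}$ by compactness is false (take $K$ to be a closed disk centered at a real point of $\Omega$), so your two cases --- nonreal $x$ handled by the Poisson bound $\pi/|b|$, real $x$ handled by the distance to the interval --- do not cover a general compact $K\subseteq\Omega$ uniformly. The fix is routine: the complement of $\Omega$ in $\bbC$ is the closed set $I=[\inf_n\rho_{1,n},\sup_n\rho_{n,n}]\subseteq\bbR$, so $d:=\operatorname{dist}(K,I)>0$. Since $\supp\tilde F_n\subseteq I$ and, for $x\in K$ and $t\in I$, one has $|x-t|^2\ge\max\{(\Real x-t)^2,\,d^2\}$, splitting the integral according to whether $|t-\Real x|\le d$ gives $\int_I|x-t|^{-2}\,dt\le 2/d+2/d=4/d$ uniformly in $n$ and in $x\in K$, and likewise $|x-\rho_{n,n}|^{-1}\le 1/d$. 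With that single uniform estimate replacing your case analysis, the proof is complete.
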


\begin{proof}
Fix some compact set $K\subseteq\Omega$ and choose $T>0$ so that $\Real[z]\in[1-T,T-1]$ for all $z\in K$.  It is clear that we may drop any single term from the sums of interest at a cost of $O(1)$ as $\nri$, so it suffices to consider
\[
\sum_{j=2}^{n}\frac{1}{x-\rho_{j,n}}-\sum_{j=1}^{n-1}\frac{1}{x-s_{j,n}}=\sum_{j=1}^{n-1}\frac{\rho_{j+1,n}-s_{j,n}}{(x-\rho_{j+1,n})(x-s_{j,n})}.
\]
Let $m,k$ be chosen so that $\{s_{j,n}\}_{j=m}^k=\{s_{j,n}\}_{j=1}^{n-1}\cap[-T,T]$ (if this intersection is empty, then we can skip this step).  Notice that each $((x-\rho_{j+1,n})(x-s_{j,n}))^{-1}$ is bounded by some constant $C_K$ uniformly for $x\in K$ and $j=m,\ldots,k$.  Thus
\[
\left|\sum_{j=m+1}^{k-1}\frac{\rho_{j+1,n}-s_{j,n}}{(x-\rho_{j+1,n})(x-s_{j,n})}\right|\leq C_K\sum_{j=m+1}^{k-1}(\rho_{j+1,n}-s_{j,n})\leq C_K\sum_{j=m+1}^{k-1}(s_{j+1,n}-s_{j,n})\leq2C_KT
\]
(if $m+1>k-1$, then just skip this step).  If $s_{j,n}>T$, then there is a constant $C_K'$ so that
\[
\left|\frac{1}{(x-\rho_{j+1,n})(x-s_{j,n})}\right|\leq \frac{C_K'}{\rho_{j+1,n}s_{j,n}}.
\]
Thus (if we set $s_{n,n}=\infty$)
\[
\left|\sum_{j=k+1}^{n-1}\frac{\rho_{j+1,n}-s_{j,n}}{(x-\rho_{j+1,n})(x-s_{j,n})}\right|\leq \sum_{j=k+1}^{n-1}\left(\frac{C_K'}{s_{j,n}}-\frac{C_K'}{\rho_{j+1,n}}\right)\leq \sum_{j=k+1}^{n-1}\left(\frac{C_K'}{s_{j,n}}-\frac{C_K'}{s_{j+1,n}}\right)\leq\frac{C_K'}{T}.
\]
A similar bound applies when summing over the indices $j=1,\ldots,m-1$.  The only terms we have not accounted for are the terms corresponding to $j\in\{m,k\}$, but as we mentioned above, any finite collection of terms contributes only $O(1)$ to the sum as $\nri$, so the desired estimate holds.
\end{proof}

\begin{proof}[Proof of Lemma \ref{xlagdist}]
Recall that our assumption that $\lambda$ is even implies that $\Omega_{\lambda,\mu}^{(\alpha)}$ has no zeros in $[0,\infty)$.  Therefore, we may invoke \cite[Corollary 2]{BK18} to apply Lemma \ref{sjtj} and write
\begin{align*}
\lim_{\nri}\frac{1}{\sqrt{n}}\sum_{j=1}^{n-|\lambda|-|\mu|}\frac{1}{\zeta_{k,\infty}^{(\alpha)}-z_{j,n}^{(\alpha)}}&=\lim_{\nri}\frac{1}{\sqrt{n}}\sum_{j=1}^{n}\frac{1}{\zeta_{k,\infty}^{(\alpha)}-\sigma_{j,n}^{(\alpha+r)}}
\end{align*}
(recall the notation $\sigma_{j,n}^{(\alpha+r)}$ from the discussion before Theorem \ref{lagthm}), where we used the fact that we could add or delete finitely many terms in this sum without changing its limit.  We can rewrite this as
\begin{align*}
\lim_{\nri}\frac{L_{n}^{(\alpha+r)}(\zeta_{k,\infty}^{(\alpha)})'}{\sqrt{n}L_{n}^{(\alpha+r)}(\zeta_{k,\infty}^{(\alpha)})}=\lim_{\nri}\frac{-L_{n-1}^{(\alpha+r+1)}(\zeta_{k,\infty}^{(\alpha)})}{\sqrt{n}L_{n}^{(\alpha+r)}(\zeta_{k,\infty}^{(\alpha)})}=\frac{-1}{\sqrt{-\zeta_{k,\infty}^{(\alpha)}}},
\end{align*}
where we used Theorem \ref{dhmthm}.
\end{proof}

\begin{proof}[Proof of Theorem \ref{lagthm}]
By Lemma \ref{xlagnozero}, we know that $\zeta_{k,n}^{(\alpha)}\neq\zeta_{k,\infty}^{(\alpha)}$ when $n$ is large.  Therefore, we may use \cite[Equation 105]{BK18}, which tells us
\[
\sum_{j=1}^{n-|\lambda|-|\mu|}\frac{1}{\zeta_{k,\infty}^{(\alpha)}-z_{j,n}^{(\alpha)}}+\sum_{j=1}^{|\lambda|+|\mu|}\frac{1}{\zeta_{k,\infty}^{(\alpha)}-\zeta_{j,n}^{(\alpha)}}=\frac{1}{2}-\frac{\alpha+r}{2\zeta_{k,\infty}^{(\alpha)}}+\sum_{{j=1}\atop{j\neq k}}^{|\lambda|+|\mu|}\frac{1}{\zeta_{k,\infty}^{(\alpha)}-\zeta_{j,\infty}^{(\alpha)}}
\]
Now divide by $\sqrt{n}$ and send $\nri$.  Every term on the right-hand side tends to $0$ because of the simplicity assumption on the zeros of $\Omega_{\lambda,\mu}^{(\alpha)}$.  In the first sum on the left-hand side, we may apply Lemma \ref{xlagdist} to evaluate the limit.  In the second sum on the left-hand side, every term tends to $0$ except possibly the one for which $j=k$.  Thus
\[
\left(\lim_{\nri}\sqrt{n}[\zeta_{k,n}^{(\alpha)}-\zeta_{k,\infty}^{(\alpha)}]\right)^{-1}=\lim_{\nri}\frac{1}{\sqrt{n}}\sum_{j=1}^{n-|\lambda|-|\mu|}\frac{1}{\zeta_{k,\infty}^{(\alpha)}-z_{j,n}^{(\alpha)}}=\frac{-1}{\sqrt{-\zeta_{k,\infty}^{(\alpha)}}}
\]
by Lemma \ref{xlagdist}.
\end{proof}

\subsection{Proof in the Type-$III$ case}\label{3case}

Let us continue to retain the notation from Section \ref{xlag}.  In this section, we will consider the zeros of the Type-$III$ $X_m$ Laguerre polynomials $\{L_{m,n}^{III(\alpha)}\}_{n\in\bbN_m^{III}}$, where $-1<\alpha<0$.  Let us denote the regular zeros of $L_{m,n}^{III(\alpha)}(z)$ by $z_{1,n}^{III(\alpha)},z_{2,n}^{III(\alpha)},\ldots,z_{n-m,n}^{III(\alpha)}$ and the exceptional zeros by $\zeta_{1,n}^{III(\alpha)},\ldots,\zeta_{m,n}^{III(\alpha)}$.  %According to \cite[Theorem 5.6]{LLMS}, for each $1\leq k\leq m$ there exists a negative real number $\zeta_{k,\infty}$ such that $L_m^{(-\alpha-1)}(-\zeta_{k,\infty})=0$ and $\zeta_{k,n}\rightarrow-\zeta_{k,\infty}$ as $\nri$.  It was shown in \cite{BK18} that $|\zeta_{k,n}-\zeta_{k,\infty}|=O(n^{-1/2})$ as $\nri$, and it was also mentioned that this estimate is not known to be sharp.  Our main result is the following.

Recall that the polynomial $L_{m,n}^{III(\alpha)}$ satisfies (\ref{xlag3ode}).  Let us write this as $y''+R_n^{III(\alpha)}(x)y'+S_n^{III(\alpha)}(x)y=0$.  We need the following result

\begin{lemma}\label{lag3pole}
If $n$ is large, then $\zeta_{k,n}^{III(\alpha)}$ is not a pole of $S_n^{III(\alpha)}(x)$ for $k=1,2,\ldots,m$.
\end{lemma}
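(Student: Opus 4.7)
The plan is to argue by contradiction using the differential equation (\ref{xlag3ode}), in the spirit of the proofs of Lemmas \ref{jacpole} and \ref{xlagnozero}. The coefficient $S_n^{III(\alpha)}(x) = n/x$ has a pole only at $x = 0$, and the only other singularities of (\ref{xlag3ode}) are simple poles of $R_n^{III(\alpha)}$ at the zeros of $L_m^{(-\alpha-1)}(-x)$, which coincide with the limit points $\{\zeta_{j,\infty}^{III(\alpha)}\}_{j=1}^m$. Since by \cite{LLMS} each $\zeta_{k,n}^{III(\alpha)}$ lies in $(-\infty,0)$, it is automatically different from $0$; hence the real content of the lemma is that for large $n$ the exceptional zeros $\zeta_{k,n}^{III(\alpha)}$ cannot coincide with any of the limit points (equivalently, with any pole of the ODE coefficients).

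Suppose, for contradiction, that $x_0 := \zeta_{k,n}^{III(\alpha)} = \zeta_{j,\infty}^{III(\alpha)}$ for some $j$. Since $-\alpha-1 \in (-1,0)$, the classical Laguerre polynomial $L_m^{(-\alpha-1)}$ has simple real zeros, so $L_m^{(-\alpha-1)}(-x)$ has a simple zero at $x_0$, and a short logarithmic-derivative computation gives
\[
R_n^{III(\alpha)}(x) = \frac{-2}{x-x_0} + O(1) \qquad \text{as } x \to x_0.
\]
By \cite{LLMS} the zeros of $L_{m,n}^{III(\alpha)}$ are simple, so with $y = L_{m,n}^{III(\alpha)}$ we have $y(x_0) = 0$ and $y'(x_0) \neq 0$. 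Expanding (\ref{xlag3ode}) near $x_0$, the terms $y''$ and $S_n^{III(\alpha)} y$ are both analytic at $x_0$, while $R_n^{III(\alpha)} y'$ carries a simple pole with residue $-2\, y'(x_0)$. Matching the $(x-x_0)^{-1}$ coefficients in the ODE forces $y'(x_0) = 0$, contradicting simplicity of the zero.

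The only step requiring care is the local Laurent expansion and the identification of the residue $-2$, which relies on the simplicity inputs from \cite{LLMS} for the zeros of both $L_m^{(-\alpha-1)}$ and $L_{m,n}^{III(\alpha)}$. There is no serious obstacle; the proof is essentially a transcription of the strategy used for Lemmas \ref{jacpole} and \ref{xlagnozero} to the Type-$III$ setting, replacing the potentials $V_{\lambda,\mu}$ appearing there with the direct residue count in (\ref{xlag3ode}).
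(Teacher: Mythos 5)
Your proposal is correct, and for the lemma as literally stated your argument coincides with the paper's: since $S_n^{III(\alpha)}(x)=n/x$ has its only pole at the origin, the whole content is that $\zeta_{k,n}^{III(\alpha)}\neq 0$, which both you and the paper obtain from the fact that the exceptional zeros lie in $(-\infty,0)$ (the paper phrases this via their convergence to the negative limit points $\zeta_{j,\infty}^{III(\alpha)}$). Where you diverge is that you go on to prove the stronger statement that $\zeta_{k,n}^{III(\alpha)}$ is also not a pole of $R_n^{III(\alpha)}$, i.e.\ that $L_{m,n}^{III(\alpha)}$ does not vanish at any zero of $L_m^{(-\alpha-1)}(-x)$, by a residue count at a putative common zero $x_0$: the ODE forces the residue $-2\,y'(x_0)$ of $R_n^{III(\alpha)}y'$ to vanish, contradicting the simplicity of the zeros from \cite[Theorem 5.5]{LLMS}. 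That computation is sound, since $-\alpha-1\in(-1,0)$ guarantees $L_m^{(-\alpha-1)}$ has simple zeros. The paper's Lemma \ref{lag3pole} does not assert this stronger fact, but the subsequent proof of Theorem \ref{lag3thm} tacitly uses it when it evaluates $R_n^{III(\alpha)}$ at $\zeta_{k,n}^{III(\alpha)}$ to produce the finite sum $\sum_{j=1}^m 2/(\zeta_{k,n}^{III(\alpha)}-\zeta_{j,\infty}^{III(\alpha)})$; this requires $\zeta_{k,n}^{III(\alpha)}\neq\zeta_{j,\infty}^{III(\alpha)}$ for every $j$, and while the $j\neq k$ cases follow from convergence to distinct limits, the case $j=k$ does not. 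So your extra paragraph is not redundant: it is the Type-$III$ analogue of Lemmas \ref{jacpole} and \ref{xlagnozero} and supplies a step the paper leaves implicit.
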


\begin{proof}
Since the zeros of $L_m^{(-\alpha-1)}$ are all in $(0,\infty)$, the convergence of the exceptional zeros of $L_{m,n}^{III(\alpha)}$ to the zeros of $L_m^{(-\alpha-1)}(-x)$ implies $L_{m,n}^{III(\alpha)}(0)\neq0$ for large $n$, which proves the result.
\end{proof}

We also need the following lemma.

\begin{lemma}\label{xlag3dist}
 Let $\{t_{j,n}^{III(\alpha)}\}_{j=1}^{n-1}$ denote the zeros of $L_{m,n}^{III(\alpha)}$ other than $\zeta_{k,n}^{III(\alpha)}$.  It holds that
\[
\lim_{\nri}\frac{1}{\sqrt{n}}\sum_{j=1}^{n-1}\frac{1}{\zeta_{k,n}^{III(\alpha)}-t_{j,n}^{III(\alpha)}}=\frac{-1}{\sqrt{-\zeta_{k,\infty}^{III(\alpha)}}},
\]
where we take the branch of the square root that is positive on $(0,\infty)$ and with branch cut $(-\infty,0]$.
\end{lemma}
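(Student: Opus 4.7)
The plan is to follow the proof of Lemma \ref{xlagdist} with minor adaptations for the Type-III setting. The two changes relative to Lemma \ref{xlagdist} are: (i) the sum in Lemma \ref{xlag3dist} runs over all $n-1$ zeros of $L_{m,n}^{III(\alpha)}$ other than $\zeta_{k,n}^{III(\alpha)}$, that is, over both the regular zeros and the $m-1$ other exceptional zeros; and (ii) the evaluation point is $\zeta_{k,n}^{III(\alpha)}$ itself rather than the fixed limit $\zeta_{k,\infty}^{III(\alpha)}$. Neither alters the leading-order behaviour after division by $\sqrt{n}$.

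First I would split the sum into a piece over the $n-m$ regular zeros $\{z_{j,n}^{III(\alpha)}\}$ and a piece over the $m-1$ other exceptional zeros $\{\zeta_{j,n}^{III(\alpha)}\}_{j\neq k}$. The second piece is bounded as $\nri$: because the zeros of $L_m^{(-\alpha-1)}$ are simple (by hypothesis of Theorem \ref{lag3thm}), the limits $\zeta_{j,\infty}^{III(\alpha)}$ are distinct, so this piece contributes $O(1)$ and vanishes after division by $\sqrt{n}$. For the first piece, the interlacing of the regular zeros of $L_{m,n}^{III(\alpha)}$ with the zeros of $L_{n-m-1}^{(\alpha+1)}$ from \cite[Theorem 5.5]{LLMS}, combined with Lemma \ref{sjtj} (using the regular zeros as the $\rho_{j,n}$ and the Laguerre zeros as the $s_{j,n}$), lets me replace the sum over the regular zeros by $\sum_{j=1}^{n-m-1}(\zeta_{k,n}^{III(\alpha)}-\sigma_{j,n-m-1}^{(\alpha+1)})^{-1}$ up to an $O(1)$ error. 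Lemma \ref{sjtj} applies because the regular zeros are all positive while $\zeta_{k,n}^{III(\alpha)}$ eventually sits in a compact subset of $(-\infty,0)\subset\Omega$.

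The remaining sum is the logarithmic derivative of $L_{n-m-1}^{(\alpha+1)}$ at $\zeta_{k,n}^{III(\alpha)}$, which by the Hahn identity \eqref{hahn} equals $-L_{n-m-2}^{(\alpha+2)}(\zeta_{k,n}^{III(\alpha)})/L_{n-m-1}^{(\alpha+1)}(\zeta_{k,n}^{III(\alpha)})$. Plugging this into Theorem \ref{dhmthm}, with the theorem's $\alpha$ replaced by $\alpha+2$, its $\beta$ replaced by $\alpha+1$, and $j=-1$, gives the leading-order expression $-\sqrt{(n-m-1)/(-\zeta_{k,n}^{III(\alpha)})}$. Dividing by $\sqrt{n}$ and using $\zeta_{k,n}^{III(\alpha)}\to\zeta_{k,\infty}^{III(\alpha)}$ then produces the claimed limit $-1/\sqrt{-\zeta_{k,\infty}^{III(\alpha)}}$. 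The main subtlety to watch is that the evaluation point moves with $n$, but because $\{\zeta_{k,n}^{III(\alpha)}\}_{n\geq N}\cup\{\zeta_{k,\infty}^{III(\alpha)}\}$ is a compact subset of $(-\infty,0)$ for sufficiently large $N$, the uniformity of the error estimates in both Lemma \ref{sjtj} and Theorem \ref{dhmthm} disposes of the issue. I expect no genuine obstacle beyond this careful bookkeeping.
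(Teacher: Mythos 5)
Your proposal is correct and follows essentially the same route the paper intends: it mimics the proof of Lemma \ref{xlagdist}, replaces the fixed evaluation point by the moving point $\zeta_{k,n}^{III(\alpha)}$ while invoking the uniformity in Lemma \ref{sjtj} and Theorem \ref{dhmthm}, and obtains the needed interlacing from \cite[Theorem 5.5]{LLMS}. The extra step of splitting off the $m-1$ other exceptional zeros as an $O(1)$ contribution is a detail the paper leaves implicit, and you handle it correctly.
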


The proof of Lemma \ref{xlag3dist} is very similar to the proof of Lemma \ref{xlagdist}, so we omit the details.  The only major difference is that one must apply Lemma \ref{sjtj} with $x=\zeta_{k,n}^{III(\alpha)}$ instead of $\zeta_{k,\infty}^{III(\alpha)}$.  This requires one to invoke the uniformity in the estimate in Lemma \ref{sjtj} and also in Theorem \ref{dhmthm}.  In order to deduce the necessary interlacing property of the zeros, one must invoke \cite[Theorem 5.5]{LLMS}.

\begin{proof}[Proof of Theorem \ref{lag3thm}]
Evaluate (\ref{xlag3ode}) at the point $\zeta_{k,n}^{III(\alpha)}$.  The last term vanishes (by Lemma \ref{lag3pole}) and \cite[Theorem 5.5]{LLMS} implies that the zeros of $L_{m,n}^{III(\alpha)}(x)$ are simple so $L_{m,n}^{III(\alpha)}(\zeta_{k,n}^{III(\alpha)})'\neq0$.  Thus we may divide through by this quantity.  Let $\{t_{j,n}^{III(\alpha)}\}_{j=1}^{n-1}$ denote the zeros of $L_{m,n}^{III(\alpha)}$ other than $\zeta_{k,n}^{III(\alpha)}$.  We deduce
\[
\sum_{j=1}^{n-1}\frac{2}{\zeta_{k,n}^{III(\alpha)}-t_{j,n}^{III(\alpha)}}+\frac{\alpha+1-\zeta_{k,n}^{III(\alpha)}}{\zeta_{k,n}^{III(\alpha)}}-\sum_{j=1}^m\frac{2}{\zeta_{k,n}^{III(\alpha)}-\zeta_{j,\infty}^{III(\alpha)}}=0
\]
Now divide by $\sqrt{n}$ and send $\nri$.  The middle term tends to $0$ and every term in the last sum tends to $0$ except the one for which $j=k$.  Thus
\[
\left(\lim_{\nri}\sqrt{n}[\zeta_{k,n}^{III(\alpha)}-\zeta_{k,\infty}^{III(\alpha)}]\right)^{-1}=\lim_{\nri}\frac{1}{\sqrt{n}}\sum_{j=1}^{n-1}\frac{1}{\zeta_{k,n}^{III(\alpha)}-t_{j,n}^{III(\alpha)}}=\frac{-1}{\sqrt{-\zeta_{k,\infty}^{III(\alpha)}}}
\]
by Lemma \ref{xlag3dist}.
\end{proof}

\section{Proof of Theorem \ref{hermthm}}\label{herm}

In this section, we will retain the notation from Section \ref{xhermpoly}.  Let us consider the zeros of the $X_m$ Hermite polynomials $\{H_{m,n}^{(\lambda)}\}_{n\in\bbN_{\lambda}}$, where $\lambda$ is an even partition and $m=|\lambda|$.  Let us denote the regular zeros of $H_{m,n}^{(\lambda)}(z)$ by $z_{1,n}^{(\lambda)},z_{2,n}^{(\lambda)},\ldots,z_{n-m,n}^{(\lambda)}$ and the exceptional zeros by $\zeta_{1,n}^{(\lambda)},\ldots,\zeta_{m,n}^{(\lambda)}$.  We will assume that the zeros of $H_{\lambda}$ are simple.
%According to \cite[Theorem 2.3]{KM}, for each $1\leq k\leq m$ there exists a non-real number $\zeta_{k,\infty}$ such that $H_{\lambda}(\zeta_{k,\infty})=0$ and $\zeta_{k,n}\rightarrow-\zeta_{k,\infty}$ as $\nri$.  It was shown in \cite{KM} that $|\zeta_{k,n}-\zeta_{k,\infty}|=O(n^{-1/2})$ as $\nri$ (see also \cite{}).  The following result shows that the upper bound in \cite[Theorem 2.3]{KM} is sharp.

Recall that the polynomial $H_{m,n}^{(\lambda)}$ satisfies (\ref{xhermode}).  Let us write this as $y''+R_n^{(\lambda)}(x)y'+S_n^{(\lambda)}(x)y=0$.  We need the following result

\begin{lemma}\label{hermpole}
If $n$ is large, then $\zeta_{k,n}^{(\lambda)}$ is not a pole of $S_n^{(\lambda)}(x)$ for $k=1,2,\ldots,m$.
\end{lemma}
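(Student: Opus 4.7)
The plan is to mirror the gauge-transformation arguments used in the proofs of Lemma~\ref{jacpole} and Lemma~\ref{xlagnozero}. Exceptional Hermite polynomials arise via a Darboux transformation of the quantum harmonic oscillator, so the function
$$G_n(x) := e^{-x^2/2}\,\frac{H_{m,n}^{(\lambda)}(x)}{H_\lambda(x)}$$
should satisfy a one-dimensional Schr\"odinger eigenvalue equation $-G_n''+V_\lambda G_n=(2n-2m+1)G_n$ with potential
$$V_\lambda(x)=x^2-2\frac{d^2}{dx^2}\log H_\lambda(x).$$
My first step is to verify this by a direct gauge calculation starting from the ODE (\ref{xhermode}): substituting $y(x)=e^{x^2/2}H_\lambda(x)G_n(x)$ and simplifying using $L''/L=(\log L)''+(L'/L)^2$ for $L=H_\lambda$ turns (\ref{xhermode}) into the stated Schr\"odinger equation. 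Alternatively, this gauge equivalence is implicit in the construction of $H_{m,n}^{(\lambda)}$ in \cite{GGM14, KM}.

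The crucial structural observation is that, since the zeros of $H_\lambda$ are assumed to be simple, $V_\lambda$ acquires a second-order pole with coefficient $+2$ at each zero $\zeta$ of $H_\lambda$: one has $(\log H_\lambda(x))''=-(x-\zeta)^{-2}+O(1)$ near such a point. The argument then proceeds by contradiction, exactly as in the proof of Lemma~\ref{jacpole}. Suppose that some exceptional zero $\zeta_{k,n}^{(\lambda)}$ coincides with a zero $\zeta$ of $H_\lambda$. For $n$ large the exceptional zeros of $H_{m,n}^{(\lambda)}$ are simple, since they lie close to the distinct simple zeros $\zeta_{j,\infty}^{(\lambda)}$ of $H_\lambda$, so the numerator and denominator of $G_n$ would both have simple zeros at $\zeta$, giving $G_n$ a regular, nonzero value there. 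But then $G_n''$ would be regular at $\zeta$ while $V_\lambda G_n$ would have a pole of order two, contradicting the Schr\"odinger equation.

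This contradiction forces no exceptional zero of $H_{m,n}^{(\lambda)}$ to coincide with any zero of $H_\lambda$ for $n$ large. Since the poles of
$$S_n^{(\lambda)}(x)=\frac{H_\lambda''(x)}{H_\lambda(x)}+2x\frac{H_\lambda'(x)}{H_\lambda(x)}+2n-2m$$
are precisely the zeros of $H_\lambda$, the conclusion of the lemma follows immediately.

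The only real obstacle is the bookkeeping in the gauge calculation that produces $V_\lambda$; this is a routine exercise that closely parallels \cite[Lemma~7.6]{Bonn} and \cite[Lemma~11]{BK18}, and I expect no surprises there. Once the Schr\"odinger reformulation is established, the pole-counting contradiction is one line, and no further input (such as interlacing of exceptional zeros or quantitative convergence rates) is needed.
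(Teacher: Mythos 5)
Your argument is correct, but it is not the route the paper takes for this particular lemma. The paper's proof works directly with the ODE (\ref{xhermode}) written as $y''+R_n^{(\lambda)}y'+S_n^{(\lambda)}y=0$: if $\zeta_{k,n}^{(\lambda)}$ were a zero of $H_{\lambda}$, then $R_n^{(\lambda)}=-2(x+H_{\lambda}'/H_{\lambda})$ has a simple pole there while $(H_{m,n}^{(\lambda)})'$ does not vanish there (the exceptional zeros being simple for large $n$, for the same reason you give), so the term $R_n^{(\lambda)}y'$ has a pole; meanwhile $y''$ is a polynomial and $S_n^{(\lambda)}y$ is regular because the pole of $S_n^{(\lambda)}$ is cancelled by the simple zero of $y$. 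The left-hand side therefore has a pole and cannot vanish identically. You instead gauge-transform to the Schr\"odinger form --- exactly the strategy the paper uses for the Jacobi and Laguerre analogues, Lemmas \ref{jacpole} and \ref{xlagnozero} --- and your computation of the potential $V_{\lambda}=x^2-2(\log H_{\lambda})''$ with eigenvalue $2n-2m+1$ checks out, as does the contradiction between the order-two pole of $V_{\lambda}G_n$ and the regularity of $G_n''$ when $G_n$ is analytic and nonzero at the common zero. What the paper's version buys is brevity: no gauge computation is needed because the coefficients of (\ref{xhermode}) are already in hand. What your version buys is uniformity across the three families and a slightly more robust pole count (order two against order zero, rather than order one against order zero). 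Both arguments rest on the same two inputs: the assumed simplicity of the zeros of $H_{\lambda}$ and the consequent simplicity (indeed, distinctness) of the exceptional zeros of $H_{m,n}^{(\lambda)}$ for large $n$, which follows from the fact that the $m$ exceptional zeros converge one-to-one to the $m$ distinct zeros of $H_{\lambda}$.
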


\begin{proof}
Suppose $H_{\lambda}(\zeta_{k,n}^{(\lambda)})=0$ for some $n$ large and $k\in\{1,\ldots,m\}$. The simplicity assumption on the zeros of $H_{\lambda}$ implies that the exceptional zeros of $H_{m,n}^{(\lambda)}$ are simple for large $n$, so $H_{m,n}^{(\lambda)}(\zeta_{k,n}^{(\lambda)})'\neq0$.  Then in (\ref{xhermode}), the second term has a pole at $\zeta_{k,n}^{(\lambda)}$, while neither of the other terms have a pole there, so the left-hand side is not identically zero.  This gives the desired contradiction.
\end{proof}

We also need the following lemma.

\begin{lemma}\label{xhermdist}
 Let $\{t_{j,n}^{(\lambda)}\}_{j=1}^{n-1}$ denote the zeros of $H_{m,n}^{(\lambda)}$ other than $\zeta_{k,n}^{(\lambda)}$.  It holds that
\[
\lim_{\nri}\frac{1}{\sqrt{n}}\sum_{j=1}^{n-1}\frac{1}{\zeta_{k,n}^{(\lambda)}-t_{j,n}^{(\lambda)}}=\frac{\sqrt{-2\left(\zeta_{k,\infty}^{(\lambda)}\right)^2}}{\zeta_{k,\infty}^{(\lambda)}},
\]
where we take the branch of the square root that is positive on $(0,\infty)$ and with branch cut $(-\infty,0]$.
\end{lemma}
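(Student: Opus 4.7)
My plan follows the blueprint of Lemma \ref{xlagdist}, adapted to the Hermite setting. First I would peel off the contributions from the $m-1$ other exceptional zeros: for $\ell\neq k$, simplicity of the zeros of $H_\lambda$ guarantees $\zeta_{\ell,n}^{(\lambda)}\to\zeta_{\ell,\infty}^{(\lambda)}\neq\zeta_{k,\infty}^{(\lambda)}$, so each term $(\zeta_{k,n}^{(\lambda)}-\zeta_{\ell,n}^{(\lambda)})^{-1}$ is bounded and vanishes under the $1/\sqrt{n}$ normalization. It therefore suffices to analyze the sum
\[
\frac{1}{\sqrt{n}}\sum_{j=1}^{n-m}\frac{1}{\zeta_{k,n}^{(\lambda)}-z_{j,n}^{(\lambda)}}
\]
over the regular (real) zeros. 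For this I would invoke the interlacing statement \cite[Corollary 4.3]{KM} (valid because $\lambda$ is even): all but at most $m+r$ of the gaps between consecutive zeros of the classical Hermite polynomial $H_n$ contain a regular zero of $H_{m,n}^{(\lambda)}$. Discarding the finitely many unmatched pairs (whose aggregate contribution is $O(1)$) and applying Lemma \ref{sjtj} at the point $x=\zeta_{k,n}^{(\lambda)}$, the sum above differs from $\frac{1}{\sqrt{n}}\sum_{j=1}^n(\zeta_{k,n}^{(\lambda)}-\sigma_{j,n})^{-1}$ by a quantity of order $1/\sqrt{n}$, where $\{\sigma_{j,n}\}_{j=1}^n$ are the zeros of $H_n$. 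The uniformity clause in Lemma \ref{sjtj} is legitimately usable because $\zeta_{k,\infty}^{(\lambda)}$ is non-real (as $\lambda$ is even), so $\zeta_{k,n}^{(\lambda)}$ eventually lies in a fixed compact subset of $\{z:\Imag z\neq 0\}$.

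The remaining sum equals the logarithmic derivative $H_n'(\zeta_{k,n}^{(\lambda)})/H_n(\zeta_{k,n}^{(\lambda)})$, which by the identity $H_n'=2nH_{n-1}$ reduces to $2nH_{n-1}(\zeta_{k,n}^{(\lambda)})/H_n(\zeta_{k,n}^{(\lambda)})$. I would then invoke the quadratic relations (\ref{hermlag}) to rewrite this as a ratio of Laguerre polynomials evaluated at $w_n:=(\zeta_{k,n}^{(\lambda)})^2$, and apply Theorem \ref{dhmthm} uniformly in a small compact neighborhood of $w_\infty:=(\zeta_{k,\infty}^{(\lambda)})^2$ inside $\bbC\setminus[0,\infty)$. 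The parities $n=2M$ and $n=2M+1$ must be treated separately: the even case collapses, after the Hahn step and the Dominguez--Maroni--Huertas ratio asymptotic, to $\frac{-z}{2M}\cdot\frac{L_{M-1}^{(1/2)}(z^2)}{L_M^{(-1/2)}(z^2)}\sim\frac{-z}{2\sqrt{M}\,\sqrt{-z^2}}$, while the odd case collapses to $\frac{L_M^{(-1/2)}(z^2)}{2z L_M^{(1/2)}(z^2)}\sim\frac{\sqrt{-z^2}}{2z\sqrt{M}}$ (writing $z=\zeta_{k,n}^{(\lambda)}$). Multiplying each by $2\sqrt{n}\sim 2\sqrt{2M}$, both parities yield the single limit $\sqrt{2}\sqrt{-z^2}/z=\sqrt{-2z^2}/z$, evaluated at $z=\zeta_{k,\infty}^{(\lambda)}$, which is the claimed expression.

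The main technical obstacle is consistent branch bookkeeping. Theorem \ref{dhmthm} supplies the factor $(-w/n)^{(\beta-\alpha)/2}$ with a principal-branch convention that I must reconcile with the convention declared in the lemma (positive on $(0,\infty)$, cut along $(-\infty,0]$); in particular I must verify that the even-$n$ and odd-$n$ computations produce the \emph{same} limit rather than differing by a sign, and that the algebraic identity $-\sqrt{2}\,z/\sqrt{-z^2}=\sqrt{-2z^2}/z$ holds with the specified branch. Some additional care is required when $\zeta_{k,\infty}^{(\lambda)}$ is purely imaginary, in which case $w_\infty\in(-\infty,0)$ lies close to the branch cut in the opposite sense; here one checks directly that the right-hand side of the lemma remains continuous and well-defined. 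Once the branch reconciliation is in place, the proof is a mechanical assembly of Lemma \ref{sjtj}, the interlacing from \cite[Corollary 4.3]{KM}, the classical identities for $H_n$, and Theorem \ref{dhmthm}.
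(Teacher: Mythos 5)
Your proposal is correct and follows essentially the same route as the paper's proof: discard the other exceptional zeros, use \cite[Corollary 4.3]{KM} together with Lemma \ref{sjtj} (at the moving point $\zeta_{k,n}^{(\lambda)}$, justified by non-reality of the limit) to replace the regular zeros by the zeros of $H_n$, rewrite the sum as $H_n'(\zeta_{k,n}^{(\lambda)})/(\sqrt{n}\,H_n(\zeta_{k,n}^{(\lambda)}))$, and finish with \eqref{hermlag} and Theorem \ref{dhmthm}. Your explicit even/odd parity computations and branch checks are correct and simply spell out details the paper leaves implicit in the phrase ``where we used Theorem \ref{dhmthm} and the relations \eqref{hermlag}.''
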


\begin{proof}%[Proof of Lemma \ref{xhermdist}]
The exceptional zeros of $H_{m,n}^{(\lambda)}$ other than $\zeta_{k,n}^{(\lambda)}$ tend to limit points other than $\zeta_{k,\infty}^{(\lambda)}$ as $\nri$ (see \cite[Section 2]{KM}).  Thus, the limit in question will not be affected if we ignore those terms, so it suffices to consider
\[
\lim_{\nri}\frac{1}{\sqrt{n}}\sum_{j=1}^{n-m}\frac{1}{\zeta_{k,n}^{(\lambda)}-z_{j,n}^{(\lambda)}}
\]
Since $H_{\lambda}$ has only non-real zeros and only simple zeros (by assumption), it follows that each $\zeta_{j,n}^{(\lambda)}$ is non-real for all large $n$.

Let $\{\eta_{j,k}\}_{j=1}^k$ denote the zeros of the classical Hermite polynomial $H_k(x)$.  According to \cite[Corollary 4.3]{KM} (which is a consequence of \cite[Theorem 3.1]{BD}) we can employ Lemma \ref{sjtj} to write
\begin{align*}
\lim_{\nri}\frac{1}{\sqrt{n}}\sum_{j=1}^{n-m}\frac{1}{\zeta_{k,n}^{(\lambda)}-z_{j,n}^{(\lambda)}}&=\lim_{\nri}\frac{1}{\sqrt{n}}\sum_{j=1}^{n}\frac{1}{\zeta_{k,n}^{(\lambda)}-\eta_{j,n}}
\end{align*}
where we used the fact that we could remove or insert a finite number of terms into these sums without changing the limit (and we also used the uniformity in Lemma \ref{sjtj}).  We can rewrite this last sum as
\begin{align*}
\lim_{\nri}\frac{H_{n}(\zeta_{k,n}^{(\lambda)})'}{\sqrt{n}H_{n}(\zeta_{k,n}^{(\lambda)})}=\lim_{\nri}\frac{2\sqrt{n}H_{n-1}(\zeta_{k,n}^{(\lambda)})}{H_{n}(\zeta_{k,n}^{(\lambda)})}=\frac{\sqrt{-2\left(\zeta_{k,\infty}^{(\lambda)}\right)^2}}{\zeta_{k,\infty}^{(\lambda)}},
\end{align*}
where we used Theorem \ref{dhmthm} and the relations \eqref{hermlag}.
\end{proof}

\begin{proof}[Proof of Theorem \ref{hermthm}]
Evaluate (\ref{xhermode}) at the point $\zeta_{k,n}^{(\lambda)}$.  The last term disappears by Lemma \ref{hermpole} and we have already observed that $H_{m,n}^{(\lambda)}(\zeta_{k,n}^{(\lambda)})'\neq0$.  Thus we may divide through by this quantity.  Let $\{t_{j,n}^{(\lambda)}\}_{j=1}^{n-1}$ denote the zeros of $H_{m,n}^{(\lambda)}$ other than $\zeta_{k,n}^{(\lambda)}$.  We deduce
\[
\sum_{j=1}^{n-1}\frac{2}{\zeta_{k,n}^{(\lambda)}-t_{j,n}^{(\lambda)}}-2\zeta_{k,n}^{(\lambda)}-\sum_{j=1}^m\frac{2}{\zeta_{k,n}^{(\lambda)}-\zeta_{j,\infty}^{(\lambda)}}=0
\]
(compare with \cite[Equation 5.9]{KM}).  Now divide by $\sqrt{n}$ and send $\nri$.  The middle term tends to $0$ and every term in the last sum tends to $0$ except the one for which $j=k$.  Thus
\[
\left(\lim_{\nri}\sqrt{n}[\zeta_{k,n}^{(\lambda)}-\zeta_{k,\infty}^{(\lambda)}]\right)^{-1}=\lim_{\nri}\frac{1}{\sqrt{n}}\sum_{j=1}^{n-1}\frac{1}{\zeta_{k,n}^{(\lambda)}-t_{j,n}^{(\lambda)}}=\frac{\sqrt{-2\left(\zeta_{k,\infty}^{(\lambda)}\right)^2}}{\zeta_{k,\infty}^{(\lambda)}}
\]
by Lemma \ref{xhermdist}.
\end{proof}

\section{Examples}\label{examples}

In this section we provide some calculations that verify the theorems from the previous sections.  For small degree examples, we can make exact computations.  For larger degrees, we rely on numerical results.

\subsection{$X_2$ Jacobi Example}\label{jacex}

The first kind of exceptional Jacobi polynomials to be extensively studied were the $X_m$ Jacobi polynomials (see \cite{DL,GKM09,GMM,Lun,LLS}).  Here we will consider an example involving $X_2$ Jacobi polynomials.  These are given by
\[
P_{2,n}^{(\alpha,\beta)}(x)=c_2P_{\emptyset,(1,1),n}^{(\alpha-2,\beta+2)}(x),\qquad n\geq m,
\]
where $c_2$ is a constant that is made explicit in \cite[Remark 5.2]{Bonn} (see \cite[Section 5.2]{Bonn}).  For convenience, we will assume that $\alpha>-1$ and $\beta>0$.  We will use the formula
\begin{align*}
P_{2,n}^{(\alpha,\beta)}(x)&=\frac{1}{\alpha+n-1}\bigg[\frac{\alpha+\beta+n-1}{2}(x-1)P_2^{(-\alpha-1,\beta-1)}(x)P_{n-3}^{(\alpha+2,\beta)}(x)\\
&\qquad\qquad\qquad\qquad+(\alpha-1)P_2^{(-\alpha-2,\beta)}(x)P_{n-2}^{(\alpha+1,\beta-1)}(x)\bigg]
\end{align*}
from \cite[Section 2]{LLS}.

In this setting, we have
\[
\Omega_{\emptyset,(1,1)}^{(\alpha-2,\beta+2)}(x)=P_2^{(-\alpha-1,\beta-1)}(x)
\]
Let $\zeta_{1,\infty}^{(\alpha,\beta)},\zeta_{2,\infty}^{(\alpha,\beta)}$ be the zeros of $P_2^{(-\alpha-1,\beta-1)}$ and let us assume they are distinct and outside $[-1,1]$.  Let $y_1^{(\alpha,\beta)},y_2^{(\alpha,\beta)}$ be the zeros of $P_2^{(-\alpha-2,\beta)}(x)$.
Then we have
\[
\frac{P_{n-3}^{(\alpha+2,\beta)}(\zeta_{1,n}^{(\alpha,\beta)})}{P_{n-2}^{(\alpha+1,\beta-1)}(\zeta_{1,n}^{(\alpha,\beta)})}=\frac{2(1-\alpha)P_2^{(-\alpha-2,\beta)}(\zeta_{1,n}^{(\alpha,\beta)})}{(\alpha+\beta+n-1)(\zeta_{1,n}^{(\alpha,\beta)}-1)P_2^{(-\alpha-1,\beta-1)}(\zeta_{1,n}^{(\alpha,\beta)})}
\]
Taking $\nri$ shows
\begin{equation}\label{jacexample}
\lim_{\nri}\frac{P_{n-3}^{(\alpha+2,\beta)}(\zeta_{1,n}^{(\alpha,\beta)})}{P_{n-2}^{(\alpha+1,\beta-1)}(\zeta_{1,n}^{(\alpha,\beta)})}=\frac{2(1-\alpha)(\zeta_{1,\infty}^{(\alpha,\beta)}-y_1^{(\alpha,\beta)})(\zeta_{1,\infty}^{(\alpha,\beta)}-y_2^{(\alpha,\beta)})}{(\zeta_{1,\infty}^{(\alpha,\beta)}-1)(\zeta_{1,\infty}^{(\alpha,\beta)}-\zeta_{2,\infty}^{(\alpha,\beta)})\,\lim_{\nri}n(\zeta_{1,n}^{(\alpha,\beta)}-\zeta_{1,\infty}^{(\alpha,\beta)})}
\end{equation}
Using the differentiation formula \eqref{jacdiff} for the Jacobi polynomials, the left-hand side of (\ref{jacexample}) can be rewritten as
\[
\lim_{\nri}\frac{2P_{n-2}^{(\alpha+1,\beta-1)}(\zeta_{1,n}^{(\alpha,\beta)})'}{(\alpha+\beta+n+1)P_{n-2}^{(\alpha+1,\beta-1)}(\zeta_{1,n}^{(\alpha,\beta)})}=\frac{2}{\sqrt{\left(\zeta_{1,\infty}^{(\alpha,\beta)}\right)^2-1}},
\]
where we used the fact that the limiting zero distribution for the polynomial $P_n^{(\alpha+1,\beta-1)}$ is the equilibrium measure on $[-1,1]$.  One can also use the quadratic formula to find closed form formulas for $\zeta_{1,\infty}^{(\alpha,\beta)}$, $\zeta_{2,\infty}^{(\alpha,\beta)}$, $y_1^{(\alpha,\beta)}$, and $y_2^{(\alpha,\beta)}$ to verify that
\[
\frac{(\zeta_{1,\infty}^{(\alpha,\beta)}-y_1^{(\alpha,\beta)})(\zeta_{1,\infty}^{(\alpha,\beta)}-y_2^{(\alpha,\beta)})}{(\zeta_{1,\infty}^{(\alpha,\beta)}-1)(\zeta_{1,\infty}^{(\alpha,\beta)}-\zeta_{2,\infty}^{(\alpha,\beta)})}=\frac{1}{1-\alpha}
\]
Combining this with our previous estimate shows
\[
\lim_{\nri}n(\zeta_{1,n}^{(\alpha,\beta)}-\zeta_{1,\infty}^{(\alpha,\beta)})=\sqrt{\left(\zeta_{1,\infty}^{(\alpha,\beta)}\right)^2-1}
\]
exactly as predicted by Theorem \ref{jacthm}.

\subsection{Type-$I$ $X_2$ Laguerre Example}\label{lagex}

Here we will consider an example involving Type-$I$ $X_2$ Laguerre polynomials, which are defined by
\[
L_{2,n}^{I(\alpha)}(x)=-L_{\emptyset,(2),n}^{(\alpha-1)}(x),\qquad n\geq 2,\quad\alpha>0
\]
(see \cite[Proposition 4]{BK18}).  In this case, we can use the formula
\[
L_{2,n}^{I(\alpha)}(x)=L_2^{(\alpha)}(-x)L_{n-2}^{(\alpha-1)}(x)+L_2^{(\alpha-1)}(-x)L_{n-3}^{(\alpha)}(x)
\]
(see \cite[Section 3]{LLMS} or \cite[Section 5]{BK18}).  

In this setting, we have
\[
\Omega_{\emptyset,(2)}^{(\alpha-1)}(x)=L_2^{(\alpha-1)}(-x)
\]
One can check that
\[
L_2^{(\alpha)}(x)=\frac{x^2}{2}-(\alpha+2)x+\frac{(\alpha+2)(\alpha+1)}{2}
\]
whose roots are
\[
z_{\pm}:=\alpha+2\pm\sqrt{\alpha+2}
\]
Let $\zeta_{1,\infty}^{I(\alpha)}=-\alpha-1+\sqrt{\alpha+1}$ so that $\zeta_{2,\infty}^{I(\alpha)}=-\alpha-1-\sqrt{\alpha+1}$.  Then, according to our above formula, we have
\[
\frac{(\zeta_{1,n}^{I(\alpha)}-z_+)(\zeta_{1,n}^{I(\alpha)}-z_-)}{(\zeta_{1,n}^{I(\alpha)}-\zeta_{1,\infty}^{I(\alpha)})(\zeta_{1,n}^{I(\alpha)}-\zeta_{2,\infty}^{I(\alpha)})}=\frac{-L_{n-3}^{(\alpha)}(\zeta_{1,n}^{I(\alpha)})}{L_{n-2}^{(\alpha-1)}(\zeta_{1,n}^{I(\alpha)})}
\]
Dividing by $\sqrt{n}$ and sending $\nri$ shows
\begin{equation}\label{lagexample}
\frac{(1+\sqrt{\alpha+1})^2-(\alpha+2)}{2\sqrt{\alpha+1}\,\lim_{\nri}\sqrt{n}(\zeta_{1,n}^{I(\alpha)}-\zeta_{1,\infty}^{I(\alpha)})}=\lim_{\nri}\frac{-L_{n-3}^{(\alpha)}(\zeta_{1,n}^{I(\alpha)})}{\sqrt{n}L_{n-2}^{(\alpha-1)}(\zeta_{1,n}^{I(\alpha)})}
\end{equation}
According to Theorem \ref{dhmthm}, the limit on the right-hand side of \eqref{lagexample} is $-(\alpha+1-\sqrt{\alpha+1})^{-1/2}$.  Thus
\[
\lim_{\nri}\sqrt{n}(\zeta_{1,n}^{I(\alpha)}-\zeta_{1,\infty}^{I(\alpha)})=-(\alpha+1-\sqrt{\alpha+1})^{1/2}=-\sqrt{-\zeta_{1,\infty}^{I(\alpha)}}
\]
exactly as predicted by Theorem \ref{lagthm}.

\subsection{Type-$III$ $X_5$ Laguerre Example}\label{lagex}

Here we will consider an example involving Type-$III$ $X_5$ Laguerre polynomials, which are defined by
\[
L_{5,n}^{III(\alpha)}(x)=-nL_{(1,1,1,1,1),\emptyset,n}^{(\alpha-5)}(x),\qquad n>5,\qquad\alpha\in(-1,0)
\]
(see \cite[Proposition 4]{BK18}).  In this case, we can use the formula
\[
L_{5,n}^{III(\alpha)}(x)=xL_{n-7}^{(\alpha+2)}(x)L_5^{(-\alpha-1)}(-x)+6L_{n-6}^{(\alpha+1)}(x)L_6^{(-\alpha-2)}(-x)
\]
(see \cite[Section 5]{LLMS} or \cite[Section 5]{BK18}).  In this setting, the exceptional zeros converge to the zeros of $L_5^{(-\alpha-1)}(-x)$ as $\nri$ (see \cite[Theorem 5.6]{LLMS}).

If we set $\alpha=-2/5$, then we can numerically calculate the roots of all of these polynomials.  We can observe that there is a zero of $L_5^{(-3/5)}(-x)$ that is approximately equal to $-1.00772514594748$ so we will call this $\zeta_{1,\infty}^{III(-2/5)}$.  Using Mathematica, we calculate
\begin{align*}
\sqrt{100}(\zeta_{1,100}^{III(-2/5)}-\zeta_{1,\infty}^{III(-2/5)})&\approx-1.0377\\
\sqrt{150}(\zeta_{1,150}^{III(-2/5)}-\zeta_{1,\infty}^{III(-2/5)})&\approx-1.03007\\
\sqrt{175}(\zeta_{1,175}^{III(-2/5)}-\zeta_{1,\infty}^{III(-2/5)})&\approx-1.0277\\
\sqrt{200}(\zeta_{1,200}^{III(-2/5)}-\zeta_{1,\infty}^{III(-2/5)})&\approx-1.02584
\end{align*}
We also calculate
\[
-\sqrt{-\zeta_{1,\infty}^{III(-2/5)}}\approx-1.00386
\]
so we appear to be observing the convergence predicted by Theorem \ref{lag3thm}.

\subsection{Hermite Examples}\label{hermex}

Consider the case in which $\lambda=\{1,1\}$ (also studied in \cite[Section 3.1]{CG20}).  This is an even partition, $r=2$, and $m=2$.  One calculates $H_{\lambda}=4(2x^2+1)$, so the zeros of $H_{\lambda}$ are $\pm i/\sqrt{2}$.  Note that they are simple and non-real.  One can now calculate
\[
H_{2,n}^{(\{1,1\})}(x)=\det\begin{pmatrix}
2x & 4x^2-2 & H_n(x)\\
2 & 8x & H_n'(x)\\
0 & 8 & H_n''(x)
\end{pmatrix}
\]
By expanding this determinant and using the fact that $H_n'=2nH_{n-1}$ and the fact that $H_n(x)=2xH_{n-1}(x)-H_{n-1}'(x)$, we find
\[
H_{2,n}^{(\{1,1\})}(x)=16(n-1)\left(-2xH_{n-1}(x)+(n(2x^2+1)-2)H_{n-2}(x)\right)
\]
Let $\zeta_{1,\infty}^{(\{1,1\})}=i/\sqrt{2}$.  Then we have
\[
\frac{H_{n-2}(\zeta_{1,n}^{(\{1,1\})})}{H_{n-1}(\zeta_{1,n}^{(\{1,1\})})}=\frac{\zeta_{1,n}^{(\{1,1\})}}{n(\zeta_{1,n}^{(\{1,1\})}-\zeta_{1,\infty}^{(\{1,1\})})(\zeta_{1,n}^{(\{1,1\})}-\zeta_{2,\infty}^{(\{1,1\})})-1}
\]
This implies
\begin{equation}\label{hermexample}
\lim_{\nri}\frac{\sqrt{n}H_{n-2}(\zeta_{1,n}^{(\{1,1\})})}{H_{n-1}(\zeta_{1,n}^{(\{1,1\})})}=\frac{i/\sqrt{2}}{i\sqrt{2}\lim_{\nri}\sqrt{n}(\zeta_{1,n}^{(\{1,1\})}-\zeta_{1,\infty}^{(\{1,1\})})}=\frac{1}{2\lim_{\nri}\sqrt{n}(\zeta_{1,n}^{(\{1,1\})}-\zeta_{1,\infty}^{(\{1,1\})})}
\end{equation}
According to Theorem \ref{dhmthm} and the relations \eqref{hermlag}, the limit on the far left-hand side of \eqref{hermexample} evaluates to
\[
\frac{\sqrt{-2\left(\zeta_{1,\infty}^{(\{1,1\})}\right)^2}}{2\zeta_{1,\infty}^{(\{1,1\})}}%=\frac{1}{i\sqrt{2}}
\]
Thus, the limit in the denominator on the far right-hand side of \eqref{hermexample} is $\zeta_{1,\infty}^{(\{1,1\})}/\sqrt{-2\left(\zeta_{1,\infty}^{(\{1,1\})}\right)^2}$, exactly as predicted by Theorem \ref{hermthm}.

\medskip

As a second example, consider the case when $\lambda=\{2,2\}$.  One calculates $H_{\lambda}=8(4x^4+3)$ so the four zeros of $H_{\lambda}$ are the fourth roots of $-3/4$.  Note that they are simple and non-real.  One can now calculate
\[
H_{2,n}^{(\{2,2\})}(x)=\det\begin{pmatrix}
4x^2-2 & 8x^3-12x & H_{n-2}(x)\\
8x & 24x^2-12 & H_{n-2}'(x)\\
8 & 48x & H_{n-2}''(x)
\end{pmatrix}
\]
One can then find the roots of this polynomial numerically.  Using Mathematica and considering the root $\zeta_{1,\infty}^{\{2,2\}}=\sqrt[4]{3/4}\,e^{i\pi/4}$, we calculate
\begin{align*}
\sqrt{100}(\zeta_{1,100}^{\{2,2\}}-\zeta_{1,\infty}^{\{2,2\}})&\approx-0.000538702+0.719837i\\
\sqrt{200}(\zeta_{1,200}^{\{2,2\}}-\zeta_{1,\infty}^{\{2,2\}})&\approx-0.000262063+0.713381i\\
\sqrt{400}(\zeta_{1,400}^{\{2,2\}}-\zeta_{1,\infty}^{\{2,2\}})&\approx-0.00012928+0.710222i\\
\sqrt{500}(\zeta_{1,500}^{\{2,2\}}-\zeta_{1,\infty}^{\{2,2\}})&\approx-0.000103149+0.709596i
\end{align*}
We also calculate
\[
\frac{\zeta_{1,\infty}^{(\{2,2\})}}{\sqrt{-2\left(\zeta_{1,\infty}^{(\{2,2\})}\right)^2}}\approx0.707107i
\]
so it appears that we are observing the convergence predicted by Theorem \ref{hermthm}.

%\newpage

%\vspace{7mm}


\begin{thebibliography}{99}






\bibitem{BD} A. F. Beardon and K. A. Driver, {\em The zeros of linear combinations of orthogonal polynomials}, J. Approx. Theory 137 (2005), no. 2, 179--186.

\bibitem{Bonn} N. Bonneux, {\em Exceptional Jacobi polynomials}, J. Approx. Theory 239 (2019), no. 2, 72--112.

\bibitem{BK18} N. Bonneux and A. Kuijlaars, {\em Exceptional Laguerre polynomials}, Stud. Appl. Math. 141 (2018), no. 4, 547--595.

\bibitem{CG20} V. Chalifour and A. M. Grundland, {\em Analytic aspects of exceptional Hermite polynomials and associated minimal surfaces}, arxiv preprint.

\bibitem{DHM} A. Deano, E. J. Huertas, and F. Marcellan, {\em Strong and ratio asymptotics for Laguerre polynomials revisited}, J. Math. Anal. Appl. 403 (2013), no. 2, 477--486.

\bibitem{DL} D. K. Dimitrov and Y. C. Lun, {\em Monotonicity, interlacing and electrostatic interpretation of zeros of exceptional Jacobi polynomials}, J. Approx. Theory 181 (2014), 18--29.

\bibitem{D14} A. Dur\'an, {\em Exceptional Charlier and Hermite orthogonal polynomials}, J. Approx. Theory 182 (2014), 29--58.

\bibitem{D14a} A. Dur\'an, {\em Exceptional Meixner and Laguerre orthogonal polynomials}, J. Approx. Theory 184 (2014), 176--208.

\bibitem{D15} A. Dur\'an, {\em  Constructing  bispectral  dual  Hahn  polynomials}, J. Approx. Theory 189 (2015), 1--28.

\bibitem{D17} A. Dur\'an, {\em Exceptional Hahn and Jacobi orthogonal polynomials}, J. Approx. Theory 214 (2017), 9--48.

\bibitem{DP} A. Dur\'an and M. P\'erez, {\em Admissibility condition for exceptional Laguerre polynomials}, J. Math. Anal. Appl. 424 (2015), 1042--1053.

\bibitem{FHV}  G. Felder, A.D. Hemery, and A.P. Veselov, {\em Zeros of Wronskians of Hermite polynomials and Young diagrams}, Physica D 241 (2012), no. 23-24, 2131--2137.

\bibitem{GGM} M. Garcia-Ferrero, D. Gomez-Ullate, and R. Milson, {\em A Bochner type characterization theorem for exceptional orthogonal polynomials}, J. Math. Anal. Appl. 472 (2019), no. 1, 584--626.

\bibitem{GGM2} M. Garcia-Ferrero, D. Gomez-Ullate, and R. Milson, {\em Exceptional Legendre polynomials via isospectral deformation and confluent Darboux transformations}, arxiv preprint.

\bibitem{G02} L. Gatteschi, {\em Asymptotics and bounds for the zeros of Laguerre polynomials: a survey}, J. Comp. Appl. Math. 144 (2002), no. 1--2, 7--27.

\bibitem{GGM14} D. Gomez-Ullate, Y. Grandati, and R. Milson, {\em Rational extensions of the quantum harmonic oscillator and exceptional Hermite polynomials}, J. Phys. A 47 (2014), 015203.

\bibitem{GKM09} D. Gomez-Ullate, N. Kamran, and R. Milson, {\em An extended class of orthogonal polynomials defined by a Sturm-Liouville problem}, J. Math. Anal. Appl. 359 (2009), 352--367.

\bibitem{GKKM} D. Gomez-Ullate, A. Kasman, A. B. J. Kuijlaars, and R. Milson, {\em Recurrence relations for exceptional Hermite polynomials}, J. Approx. Theory 204 (2016), 1--16.

\bibitem{GMM} D. Gomez-Ullate, F. Marcellan, and R. Milson, {\em Asymptotic and interlacing properties of zeros of exceptional Jacobi and Laguerre polynomials}, J. Math. Anal. Appl. 399 (2013), no. 2, 480--495.

\bibitem{Grosux2} C. Grosu and C. Grosu, {\em The irreducibility of some Wronskian Hermite polynomials}, arxiv preprint, https://arxiv.org/abs/2007.00065.

%\bibitem{HOS} C. L. Ho, S. Odake, and R. Sasaki, {\em Properties of exceptional ($X_{\ell}$) Laguerre and Jacobi Polynomials}, Symmetry, Integrability, and Geometry: Methods and Applications (SIGMA) 7 (2011), Paper 107, 24pp.

\bibitem{HS} C. L. Ho and R. Sasaki, {\em Zeros of the exceptional Laguerre and Jacobi Polynomials}, 
International Scholarly Research Notices:  Mathematical Physics 2012 (2012), 27 pp.

\bibitem{H15} A. P. Horvath, {\em The electrostatic properties of zeros of exceptional Laguerre and Jacobi polynomials and stable interpolation}, J. Approx. Theory 194 (2015), 87--107.

\bibitem{H17} A. P. Horvath, {\em The energy function with respect to the zeros of the exceptional Hermite polynomials}, Acta Math. Sci. Ser. B (Engl. Ed.) 37 (2017), no. 5, 1483--1496.

\bibitem{H18} A. P. Horvath, {\em Translation operator with exceptional Laguerre polynomials}, arxiv preprint.

\bibitem{KMO} A. B. J. Kuijlaars, A. Martinez-Finkelshtein, and R. Orive, {\em Orthogonality of Jacobi polynomials with general parameters}, Electronic Transactions on Numerical Analysis 19 (2005), 1--17.

\bibitem{KM} A. B. J. Kuijlaars and R. Milson, {\em Zeros of exceptional Hermite polynomials}, J. Approx. Theory 200 (2015), 28--39.

\bibitem{KM20} A. Kasman and R. Milson, {\em The Adelic Grassmannian and exceptional Hermite polynomials}, (2020), arxiv preprint.

\bibitem{LLS} C. Liaw, L. Littlejohn, and J. Stewart, {\em Spectral analysis for the exceptional $X_m$ Jacobi equation}, Electronic J. Diff. Equations, 2015 (2015), no. 194, 1--10.

\bibitem{LLMS} C. Liaw, L. Littlejohn, R. Milson, and J. Stewart, {\em The spectral analysis of three families of exceptional Laguerre polynomials}, J. Approx. Theory 202 (2016), 5--41.

\bibitem{Lun} Y. C. Lun, {\em Behavior of zeros of $X_1$-Jacobi and $X_1$-Laguerre exceptional orthogonal polynomials}, arxiv preprint.

\bibitem{OS} S. Odake and R. Sasaki, {\em Another set of infinitely many exceptional $X_\ell$ Laguerre polynomials}, Physics Letters B 684 (2009), 414--417.

\bibitem{SimonIV} B. Simon, {\em A Comprehensive Course in Analysis, Part 4: Operator Theory}, American Mathematical Society, Providence, RI, 2015.











\end{thebibliography}
\end{document}